\theoremstyle{plain}
\newtheorem{theorem}{Theorem}
\newtheorem{lemma}{Lemma}
\newtheorem{assumption}{Assumption}
\def \sd  {{\underline{s}}}
\def \td  {{\underline{t}}}
\def \ud  {{\underline{u}}}
\def \Td  {{\underline{T}}}
\def \hX  {{\widehat{X}}}
\def \bX  {{\overline{X}}}
\def \hM  {{\widehat{M}}}
\def \bM  {{\overline{M}}}
\def \tn  {{t_n}}
\def \tnp {{t_{n+1}}}
\def \EE  {{\mathbb{E}}}
\def \PP  {{\mathbb{P}}}
\def \RR  {{\mathbb{R}}}
\def \D   {{\rm d}}
\def \DW  {{\Delta W}}
\def \hmax {{h_{\rm max}}}
\def \halfs {{\textstyle \frac{1}{2}}}
\def \e   {{\rm e}}  
\newcommand{\fracs}[2]{{\textstyle \frac{#1}{#2}}}
\begin{document}

\begin{frontmatter}
\title{Adaptive Euler-Maruyama method for\\ 
       SDEs with non-globally Lipschitz drift:\\
       part II, infinite time interval}
\runtitle{Adaptive Euler-Maruyama method for non-Lipschitz drifts}

\begin{aug}
\author{\fnms{Wei} \snm{Fang}\ead[label=e1]{wei.fang@maths.ox.ac.uk}}
\and
\author{\fnms{Michael~B.} \snm{Giles}\ead[label=e2]{mike.giles@maths.ox.ac.uk}}

\runauthor{W.~Fang \& M.B.~Giles}

\affiliation{University of Oxford}
\address{Mathematical Institute\\University of Oxford\\Oxford OX2 6GG\\United Kingdom\\
\printead{e1}\\
\phantom{E-mail:\ }\printead*{e2}}
\end{aug}

\begin{abstract}
This paper proposes an adaptive timestep construction 
for an Euler-Maruyama approximation of the ergodic SDEs with a drift 
which is not globally Lipschitz over an infinite time interval. If the timestep is bounded appropriately, we show not only the stability of the numerical solution and the standard strong convergence order, but also that the bound for moments and strong error of the numerical solution are uniform in $T,$ which allow us to introduce the adaptive multilevel Monte Carlo. Numerical experiments support our analysis.    
\end{abstract}

\begin{keyword}[class=MSC]
\kwd{60H10}
\kwd{60H35}
\kwd{65C30}
\end{keyword}

\begin{keyword}
\kwd{SDE}
\kwd{Euler-Maruyama}
\kwd{strong convergence}
\kwd{adaptive timestep}
\end{keyword}

\end{frontmatter}

\section{Introduction}

In this paper we consider an $m$-dimensional stochastic 
differential equation (SDE) driven by a $d$-dimensional 
Brownian motion:
\begin{equation}
\D X_t = f(X_t)\,\D t + g(X_t)\,\D W_t,
\label{SDE}
\end{equation}
which has a non-globally Lipschitz drift $f: \RR^m\!\rightarrow\!\RR^m$ satisfying the dissipativity condition: for some $\alpha,\beta>0,$
\begin{equation}
\langle x,f(x)\rangle \leq -\alpha \|x\|^2+\beta,
\label{Intro Contractive}  
\end{equation}
and a bounded volatility 
$g: \RR^m\!\rightarrow\!\RR^{m\times d}.$ In particular, we focus on a class of SDEs which are ergodic in nature and converge exponentially to some invariant measure $\pi$. Evaluating the expectation of some function $\varphi(x)$ with respect to that invariant measure $\pi$ is of great interest in mathematical biology, physics and Bayesian inference in statistics:
$$\pi(\varphi)\coloneqq\int\varphi(x)\,\mathrm{d}\pi(x)=\lim_{t\rightarrow \infty}\mathbb{E}\left[\varphi(X_t) \right],\ \ \varphi\in L^1(\pi). $$
Several different methodologies have been developed to estimate it.\par
First, we can compute the probability density function $\rho(x)$ of $\pi$ by solving the corresponding stationary Fokker-Planck equation, see \cite{soize1994fokker}. However, the stationary Fokker-Planck equation is a partial differential equation (PDE) and its numerical solution becomes extremely expensive when the dimension of the PDEs becomes large.\par
The second approach is based on the ergodicity of the SDEs:
\begin{equation}
\lim_{T\rightarrow\infty} \frac{1}{T}\int_0^T \varphi(X_t)\,\mathrm{d}t\,=\, \pi(\varphi),\ a.s.,
\label{timeaverage}
\end{equation} 
where the limit does not depend on initial value $x_0.$ This approach uses discretized numerical schemes to approximate the SDEs and requires the numerical solution $\widehat{X}_t$ to preserve the ergodicity. In practice, we can choose a sufficiently large $N$ and compute
$$\frac{1}{N}\sum_{n=1}^N\varphi(\hX_{nh}),$$
where $\hX_{nh}$ is the numerical solution at the $n$th discretized time point using an ergodic method with a uniform timestep $h.$ Under the dissipativity condition (\ref{Intro Contractive}) together with the Lipschitz condition for $f$, Talay \cite{talay1990second} shows the standard weak convergence order for the Milstein method:
$$\lim_{N\rightarrow\infty}\frac{1}{N}\sum_{n=1}^N\varphi(\hX_{nh})=\int \varphi(x)\,\mathrm{d}\pi(x)+O(h).$$
Roberts $\&$ Tweedie in \cite{roberts1996exponential} analyse the ergodicity of the unadjusted Langevin algorithm for the Langevin equation,  which has uniform volatility and satisfies dissipativity condition (\ref{Intro Contractive}). This scheme corresponds to the standard Euler-Maruyama method:
\[\hX_{(n+1)h}=\hX_{nh}+f(\hX_{nh})\,h+\Delta W_n\]
using a uniform timestep of size $h$ with Brownian increments $\Delta W_n.$ The paper shows that the numerical solution is not ergodic when $f$ has a polynomial degree larger than 1. Metropolis-adjusted Langevin algorithm (MALA) is introduced but the numerical solutions are still not exponentially ergodic for non-linear drift $f.$ 

For globally Lipschitz Langevin SDEs satisfying the dissipativity condition (\ref{Intro Contractive}), the standard Euler-Maruyama method is shown in \cite{MSH02} to inherit ergodicity provided the timesteps are sufficiently small.
However, the standard Euler-Maruyama method and Milstein method fail to be stable for non-globally Lipschitz SDEs. The Split-step backward Euler method 
\begin{eqnarray*}
\hX_{nh}^*     &=& \hX_{nh} + f(\hX_{nh}^*)\, h,\\
\hX_{(n\!+\!1)h} &=& \hX_{nh}^* + g(\hX_{nh}^*)\, \Delta W_n.
\end{eqnarray*}
and the drift-implicit Backward Euler method:
\[
\hX_{(n\!+\!1)h} = \hX_{nh} + f(\hX_{(n\!+\!1)h})\, h + g(\hX_{nh})\, \Delta W_n.
\]
are proved to be ergodic in \cite{MSH02}.\par

Under the same conditions, Hansen in \cite{hansen2003geometric} considers the local linearization of the drift coefficient, that is the first-order Taylor approximation of $\tilde{X}_t$: for $t\in [nh,(n+1)h],$ given $\tilde{X}_{nh}=x,$
\begin{equation*}
\D\tilde{X}_t=(f(x)+\nabla f(x)(\tilde{X}_t-x))\,\D t+\D W_t,
\label{locallinear}
\end{equation*}
which is an Ornstein-Uhlenbeck diffusion and we can calculate the analytical conditional distribution of $\tilde{X}_{(n+1)h}.$ However, this analytical treatment only applies when the diffusion coefficient is uniform.

An adaptive timestepping algorithm proposed by Lamba, Mattingly $\&$ Stuart in \cite{lamba2006adaptive} chooses the step size by halving or doubling based on the local error estimation and a user-input tolerance $\tau$. More precisely,
\begin{equation*}
\begin{aligned}
\widehat{X}_{t_n}^*&= \widehat{X}_{t_n}+ f(\widehat{X}_{t_n})\,h_n \\
\widehat{X}_{t_{n+1}}&=\widehat{X}_{t_n}^*+g(\widehat{X}_{t_n})\,\Delta W_n
\end{aligned}
\end{equation*}
where $h_n=2^{-k_n}\hmax$ satisfies that
$$h_n\leq \min(2h_{n-1},\hmax),\ \ k_n=\min(k\in\mathbb{Z}:\ |f(\widehat{X}_{t_n}^*)-f(\widehat{X}_{t_n})|\leq \tau).$$
This scheme is proved to preserve the ergodicity of original SDEs under the dissipativity condition (\ref{Intro Contractive}) and boundedness and invertibility of the diffusion coefficient $g.$

Lemaire \cite{lemaire2007}
considers an infinite time interval under the dissipativity condition generated by a general Lyapunov function using a timestep with an 
upper bound which decreases towards zero over time, and proves 
convergence of the empirical distribution to the invariant 
distribution of the SDE.

Finally, without requiring the ergodicity of the schemes, for exponentially ergodic SDEs, we can choose a sufficiently large $T$ such that $$|\mathbb{E}[\varphi(X_T)]-\pi(\varphi)|\leq \varepsilon.$$
Then, for this fixed $T,$ we can use of all the methods mentioned in Part I of this pair of articles \cite{Part1} to estimate $\mathbb{E}[\varphi(X_T)].$ Milstein $\&$ Tretyakov \cite{milstein2007computing} analyse the error of this kind of approach based on their quasi-symplectic method. In practice, a suitable choice of initial data is important because the transition period to a sufficient proximity of the equilibrium can be rather long. Therefore, running a small number of pioneer paths can be employed to obtain a good initial distribution for the overwhelming majority of simulations.

We should remark here that the PDE approach is far too expensive in high dimensions. The time-averaging approach (\ref{timeaverage}) requires the numerical methods to preserve the ergodicity but the third approach does not. The length of the time interval $[0,T]$ used in the time-averaging approach is much longer than the third approach, for it needs not only to ensure that the distribution of $\hX_T$ is sufficiently close to the invariant measure $\pi,$ but also to guarantee a small variance for the average. Therefore, the second approach needs to simulate a single long path but the third one needs to simulate a lot of relative short paths, which allows multilevel Monte Carlo (MLMC) and parallel computing techniques to be employed. One important concern about the third approach is that in the numerical analysis of existing algorithms in the finite time interval $[0,T],$ the strong error increases exponentially as $T$ increases. This is not acceptable when we need to simulate a much larger $T$ and it will be a key concern in this paper. The final issue about the second and third approaches is how to choose a good $T$ such that the weak error is bounded appropriately.    
 
In this paper, we propose instead to use the standard explicit 
Euler-Maruyama method, but with an adaptive timestep $h_n$ which 
is a function of the current approximate solution $\hX_{t_n}$. By setting a suitable condition for $h,$ we can show that, instead of an exponential bound, the numerical solution has a uniform bound with respect to $T$ for both moments and the strong error. Then, MLMC methodology \cite{giles08,giles15} is employed and non-nested timestepping is used to construct an adaptive MLMC \cite{GLW16}. Following the idea of Glynn and Rhee \cite{glynn2014exact} to estimate the invariant measure of some Markov chains, we introduce an adaptive MLMC algorithm for the infinite interval, in which each level $\ell$ has a different time interval length $T_\ell,$ to achieve a better computational performance. Note that using different time interval lengths allows us not to worry how to choose an appropriate $T$ before simulation. The MLMC algorithm will automatically terminate at a level $L$ with a sufficiently large $T_L.$ 

The rest of the paper is organised as follows.  Section 2 states
the main theorems and proves some minor lemmas.  Section 3 introduces the MLMC schemes, and the relevant numerical experiments are provided in section 4. The proofs of the three main theorems are deferred to section 5, and finally, section 6 has some 
conclusions and discusses future extensions.

In this paper we consider the infinite time interval $[0,\infty)$ and let 
$(\Omega,\mathcal{F},\PP)$ be a probability space with 
normal filtration $(\mathcal{F}_t)_{t\in[0,\infty)}$ for section 2 and $(\mathcal{F}_t)_{t\in(-\infty,0]}$ for section 3 corresponding to
a $d$-dimensional standard Brownian motion 
$W_t=(W^{(1)},W^{(2)},\ldots,W^{(d)})_t.$
We denote the vector norm by 
$\|v\|\triangleq(|v_1|^2+|v_2|^2+\ldots+|v_m|^2)^{\frac{1}{2}}$, 
the inner product of vectors $v$ and $w$ by 
$\langle v,w \rangle\triangleq v_1w_1+v_2w_2+\ldots+v_mw_m$, 
for any $v,w\in\RR^m$ and the Frobenius matrix norm by 
$\|A\|\triangleq \sqrt{\sum_{i,j}A_{i,j}^2}$ 
for any $A\in\RR^{m\times d}.$

\section{Adaptive algorithm and theoretical results}

\subsection{Adaptive Euler-Maruyama method}

The adaptive Euler-Maruyama discretisation is
\begin{equation}
\tnp = \tn + h_n, ~~~~
\hX_\tnp = \hX_\tn + f(\hX_\tn)\, h_n + g(\hX_\tn)\, \DW_n,
\label{eq:dis_SDE}
\end{equation}
where 
$h_n\triangleq h(\hX_\tn)$ and $\DW_n \triangleq W_\tnp \!-\! W_\tn$,
and there is fixed initial data $t_0\!=\!0,\ \hX_0\!=\!x_0$.

We use the notation 
$
\td \triangleq \max\{t_n: t_n\!\leq\! t\},\
n_t\triangleq\max\{n: t_n\!\leq\! t\}
$
for the nearest time point before time $t$, and its index.

We define the piecewise constant interpolant process $\bX_t=\hX_\td$ 
and also define the standard continuous interpolant \cite{kp92} as
\begin{equation}
\hX_t=\hX_\td+f(\hX_\td) (t\!-\!\td) + g(\hX_\td) (W_t\!-\!W_\td),
\label{eq:con_SDE}
\end{equation}
so that $\hX_t$ is the solution of the SDE
\begin{equation}
\D \hX_t =  f(\hX_\td)\, \D t + g(\hX_\td) \, \D W_t= f(\bX_t)\,\D t+ g(\bX_t)\,\D W_t.
\label{eq:approx_SDE}
\end{equation}

In the following subsections, we state some preliminary lemmas, the key results on 
stability and strong convergence, and related results on 
the number of timesteps, introducing various assumptions 
as required for each.  The main proofs are deferred to 
Section \ref{sec:proofs}.

\subsection{Stability}

\begin{assumption}[Local Lipschitz and linear growth]
\label{assp:linear_growth}
$f$ and $g$ are both locally Lipschitz, so that for any 
$R\!>\!0$ there is a constant $C_R$ such that
\[
\| f(x)\!-\!f(y) \| + \| g(x)\!-\!g(y) \| \leq C_R\, \|x\!-\!y\| 
\]
for all $x,y\in \RR^m$ with $\|x\|,\|y\|\leq R$.
Furthermore, there exist constants $\alpha, \beta > 0$ 
such that for all $x\in \RR^m$, $f$ satisfies the dissipativity condition:
\begin{equation}
\langle x,f(x)\rangle \leq -\alpha\|x\|^2+\beta,
\label{eq:onesided_growth}
\end{equation}
and $g$ is globally bounded and non-degenerate:
\begin{equation}
\|g(x)\|^2\leq \beta.
\label{eq:g_bound}
\end{equation} 
\end{assumption}

\begin{lemma}[SDE stability]

\label{lemma:SDE_stability}

If the SDE satisfies Assumption \ref{assp:linear_growth} with $X_0=x_0,$
then for all $p\in(0,\infty),$ there is a constant $C_p$ which only depends on $x_0$ and $p$ such that, $\forall\,t\geq 0,$  
\[
\EE\left[ \|X_t\|^p \right] \leq C_p.
\]
\end{lemma}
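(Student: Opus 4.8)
The plan is to establish a uniform-in-time Lyapunov-type moment bound via It\^o's formula followed by a Gronwall argument. I would first treat the case $p\geq 2$ and recover $p\in(0,2)$ at the end by Jensen's inequality, since $\EE[\|X_t\|^p]\leq(\EE[\|X_t\|^2])^{p/2}$ for $p\leq 2$. Note that Assumption \ref{assp:linear_growth} (local Lipschitz continuity together with the dissipativity bound \eqref{eq:onesided_growth}) guarantees a unique global solution with no explosion, so $X_t$ is well-defined for every $t\geq 0$.

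For $p\geq 2$ I would introduce the Lyapunov function $V(x)=(1+\|x\|^2)^{p/2}$ and apply It\^o's formula, whose generator acting on $V$ is
\[
\mathcal{L}V(x) = p(1+\|x\|^2)^{p/2-1}\langle x,f(x)\rangle + \half\,\mathrm{tr}\big(g(x)g(x)^{\!\top}\,\mathrm{Hess}\,V(x)\big).
\]
Using the dissipativity condition $\langle x,f(x)\rangle\leq-\alpha\|x\|^2+\beta$ for the drift term, together with $\|g(x)\|^2\leq\beta$ and $\|g(x)^{\!\top}x\|^2\leq\beta\|x\|^2$ for the trace term (the Hessian cross term carries the coefficient $p(p-2)\geq 0$, so its sign is controlled precisely because $p\geq 2$), and writing $\|x\|^2=(1+\|x\|^2)-1$, a short computation yields
\[
\mathcal{L}V(x) \leq -p\alpha\,(1+\|x\|^2)^{p/2} + C\,(1+\|x\|^2)^{p/2-1}
\]
for a constant $C$ depending only on $p,\alpha,\beta$. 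Since $(1+\|x\|^2)^{p/2-1}$ is of strictly lower order, Young's inequality lets me absorb the positive term into the leading negative one, giving the dissipative estimate $\mathcal{L}V(x)\leq -c_1 V(x)+c_2$ with $c_1=\tfrac{p\alpha}{2}$ and $c_2=c_2(p,\alpha,\beta)$.

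To convert this into a bound on $\EE[V(X_t)]$ I would localise with the stopping times $\tau_R=\inf\{t\geq 0:\|X_t\|\geq R\}$. Applying It\^o's formula up to $t\wedge\tau_R$ kills the martingale term in expectation and ensures integrability, so
\[
\EE[V(X_{t\wedge\tau_R})]\leq V(x_0)+\EE\Big[\int_0^{t\wedge\tau_R}(-c_1 V(X_s)+c_2)\,\D s\Big].
\]
Passing through the integral form of Gronwall's inequality and letting $R\to\infty$ via Fatou's lemma gives
\[
\EE[V(X_t)]\leq \e^{-c_1 t}V(x_0)+\frac{c_2}{c_1}\big(1-\e^{-c_1 t}\big)\leq V(x_0)+\frac{c_2}{c_1}=:C_p,
\]
which is independent of $t$. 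Since $\|x\|^p\leq V(x)$, this is the claimed bound for $p\geq 2$, and $p\in(0,2)$ follows by Jensen.

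The main obstacle I anticipate is twofold. First, the generator computation must be carried out carefully enough that the lower-order terms are genuinely absorbed into $-c_1 V$ with an explicit strictly negative dissipation rate; this relies on the sign of the Hessian cross term and on the boundedness of $g$. Second, the localisation must be made rigorous so that the differential inequality for $\EE[V(X_t)]$ is valid without assuming a priori that the moments are finite. The crucial payoff, uniformity in $t$, stems entirely from the strictly negative coefficient $-c_1$, which is a consequence of the dissipativity \eqref{eq:onesided_growth} rather than of mere linear growth.
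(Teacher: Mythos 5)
Your proof is correct and follows essentially the same route as the paper: It\^o's formula combined with the dissipativity condition and Young's inequality to absorb the lower-order $(1+\|x\|^2)^{p/2-1}$ term, yielding exponential decay of the initial contribution plus a constant. The only cosmetic differences are your use of the smoothed Lyapunov function $(1+\|x\|^2)^{p/2}$ in place of $\|x\|^p$, the explicit localisation (the paper silently assumes the stochastic integral is a true martingale, whereas you stop at $\tau_R$ and pass to the limit), and Jensen in place of H\"older for $p<2$.
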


\begin{proof}
The proof is for 
$p\!\geq\!2$; the result for $0\!<\! p \!<\! 2$ follows 
from H{\"o}lder's inequality.

By It\^{o}'s formula, we have for any $t\geq 0,$
\begin{eqnarray*}
\e^{p\alpha t/2}\|X_t\|^p-\|X_0\|^p&\leq& \int_0^t p\left(\frac{\alpha}{2}\|X_t\|^2+\langle X_s, f(X_s)\rangle\right) \e^{p\alpha s/2}\|X_s\|^{p-2} \D s\\
&& + \int_0^t \frac{p(p-1)}{2}\|g(X_s)\|^2\e^{p\alpha s/2}\|X_s\|^{p-2} \D s\\
&& +\int_0^t p\,\e^{p\alpha s/2}\|X_s\|^{p-2}\langle X_s,g(X_s)\D W_s\rangle,
\end{eqnarray*}
and then by taking expectations on both sides and using the dissipativity property (\ref{eq:onesided_growth}) and boundedness (\ref{eq:g_bound}), we obtain
\begin{eqnarray*}
\EE\left[\e^{p\alpha t/2}\|X_t\|^p\right]-\EE\left[\|X_0\|^p\right] \!\!\!
&\leq& \!\!\!  \int_0^t -\frac{p\alpha}{2}\, \EE\left[\e^{p\alpha s/2}\|X_s\|^{p}\right] \D s\\
&&+\int_0^t \EE\left[\frac{p(p+1)\beta}{2} \e^{p\alpha s/2}\|X_s\|^{p-2}\right] \D s.
\end{eqnarray*}
Young inequality (\ref{Young inequality}) implies that
\[
\frac{p(p+1)\beta}{2} \,\e^{p\alpha s/2}\,\|X_s\|^{p-2}\leq \frac{p\alpha}{2}\,\e^{p\alpha s/2}\|X_s\|^p + c_p\,\e^{p\alpha s/2}
\]
where $c_p=\left(\frac{p-2}{p\alpha}\right)^{p/2-1}(\beta(p+1))^{p/2}.$
Therefore, we obtain
\begin{eqnarray*}
\EE\left[\e^{p\alpha t/2}\|X_t\|^p\right]-\EE\left[\|X_0\|^p\right] \!\!\! 
&\leq &\!\!\! \int_0^t c_p\, \e^{p\alpha s/2}\, \D s,
\end{eqnarray*}
and we can conclude that
\[
\EE\left[\|X_t\|^p\right]\leq \frac{2c_p}{p\alpha}+\e^{-p\alpha t/2}\EE\left[\|X_0\|^p\right]\leq \frac{2c_p}{p\alpha}+ \|x_0\|^p \triangleq C_p.
\]
\end{proof}

\vspace{-0.1in}

We now specify the critical assumption about the adaptive timestep.

\begin{assumption}[Adaptive timestep] 
\label{assp:timestep}

The adaptive timestep function $h: \RR^m\rightarrow (0,\hmax]$ is 
continuous and bounded, with $0<\hmax<\infty$, and there exist constants $\alpha, \beta > 0$ 
such that for all 
$x\in \RR^m$, $h$ satisfies the inequality
\begin{equation}
\langle x, f(x) \rangle + \halfs \, h(x)\, \|f(x)\|^2 \leq -\alpha \|x\|^2 + \beta.
\label{eq:timestep}
\end{equation}
\end{assumption}
Note that if another timestep function $h^\delta(x)$ is smaller than 
$h(x)$, then  $h^\delta(x)$ also satisfies this Assumption.  Note 
also that the form of (\ref{eq:timestep}), which is motivated by 
the requirements of the proof of the next theorem, is very similar 
to (\ref{eq:onesided_growth}).  Indeed, if (\ref{eq:timestep}) is 
satisfied then (\ref{eq:onesided_growth}) is also true for the 
same values of $\alpha$ and $\beta$. Compared with the condition in the finite time analysis \cite{Part1}, we need the additional $\hmax$ upper bound to achieve the following result.

\begin{theorem}[Infinite time stability]

\label{thm:stability}

If the SDE satisfies Assumption \ref{assp:linear_growth}, and the 
timestep function $h$ satisfies Assumption \ref{assp:timestep},
then for all $p\in(0,\infty)$ there exists a constant $C_{p}$ which 
depends solely on $p,$ $x_0$, $\hmax$ and the constants $\alpha, \beta$ in
Assumption \ref{assp:timestep} such that, $\forall t\geq 0,$
\[
\EE\left[ \|\hX_t\|^p \right] < C_{p},\ \EE\left[ \|\bX_t\|^p \right] < C_{p}.
\]
\end{theorem}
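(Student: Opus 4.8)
The plan is to prove the bound first for the piecewise-constant grid process $\bX_t=\hX_\td$ and then transfer it to the continuous interpolant $\hX_t$, reducing throughout to $p\ge 2$ (the case $0<p<2$ following by H\"older's inequality, exactly as in Lemma~\ref{lemma:SDE_stability}). The starting point is a one-step recursion. Squaring the scheme \eqref{eq:dis_SDE} and taking $\EE[\,\cdot\mid\mathcal F_\tn]$, the increment $\DW_n$ is mean-zero and independent of $\mathcal F_\tn$, so the cross terms vanish and $\EE[\|g(\hX_\tn)\DW_n\|^2\mid\mathcal F_\tn]=h_n\|g(\hX_\tn)\|^2$, giving
\[
\EE\!\left[\|\hX_\tnp\|^2\mid\mathcal F_\tn\right]
=\|\hX_\tn\|^2+2h_n\!\left(\langle\hX_\tn,f(\hX_\tn)\rangle+\half h_n\|f(\hX_\tn)\|^2\right)+h_n\|g(\hX_\tn)\|^2.
\]
The bracketed term is precisely the quantity controlled by Assumption~\ref{assp:timestep}; the factor $\half$ in \eqref{eq:timestep} is tuned exactly to the $h_n^2\|f\|^2$ square term. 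Combined with $\|g\|^2\le\beta$ and $1-x\le\e^{-x}$ this yields $\EE[\|\hX_\tnp\|^2\mid\mathcal F_\tn]\le(1-2\alpha h_n)\|\hX_\tn\|^2+3\beta h_n\le\e^{-2\alpha h_n}\|\hX_\tn\|^2+3\beta h_n$.

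The conceptual difficulty now appears: the per-step contraction factor $1-2\alpha h_n$ is weak precisely when $\|\hX_\tn\|$ is large, because the adaptive rule then forces $h_n$ small, so the decay is governed by elapsed time rather than by step count, and a naive recursion for $\EE[\|\hX_\tn\|^2]$ only produces a bound growing in $n$. To capture the real-time decay I weight by $\e^{2\alpha\tn}$ (legitimate since $\tn$ is $\mathcal F_\tn$-measurable, as $h_k=h(\hX_{t_k})$) and set $R\coloneqq 3\beta\hmax/(1-\e^{-2\alpha\hmax})$. Using the Step~1 bound together with the monotonicity of $h\mapsto h/(1-\e^{-2\alpha h})$ on $(0,\hmax]$, one checks that $M_n\coloneqq\e^{2\alpha\tn}(\|\hX_\tn\|^2-R)$ is a supermartingale for $(\mathcal F_\tn)_n$, i.e. $\EE[M_{n+1}\mid\mathcal F_\tn]\le M_n$.

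Next I pass to deterministic times by optional stopping. For fixed $t$, the index $n_t$ is a stopping time for $(\mathcal F_\tn)_n$ since $\{n_t\le n\}=\{t_{n+1}>t\}\in\mathcal F_\tn$. Applying optional stopping (through $M_{n_t\wedge N}$ and Fatou, after recording that $n_t<\infty$ a.s. and each $M_n$ is integrable) gives $\EE[M_{n_t}]\le\EE[M_0]=\|x_0\|^2-R$. Now $\hX_{t_{n_t}}=\bX_t$ and $t-\td<\hmax$, so the stopped weight obeys $\e^{2\alpha\td}\ge\e^{-2\alpha\hmax}\e^{2\alpha t}$; splitting $\|\bX_t\|^2-R$ into positive and negative parts, bounding $(\|\bX_t\|^2-R)^-\le R$, and dividing by $\e^{2\alpha t}$ cancels the growing factor and delivers $\EE[\|\bX_t\|^2]\le C_2$ uniformly in $t$. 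This is exactly the point at which $\hmax$ is indispensable: it bounds the weight ratio $\e^{2\alpha t}/\e^{2\alpha\td}$ over the final partial step.

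Finally I transfer to $\hX_t$ and to general $p$. On $[\td,t)$ the interpolant is $\hX_t=\bX_t+f(\bX_t)(t-\td)+g(\bX_t)(W_t-W_\td)$; from \eqref{eq:timestep} one obtains $h(x)\|f(x)\|\le 2\|x\|+\sqrt{2\beta\hmax}$, so the drift displacement is controlled linearly by $\|\bX_t\|$, while the diffusion displacement has Gaussian moments bounded via $\|g\|^2\le\beta$ and $t-\td\le\hmax$; hence $\EE[\|\hX_t\|^p]\le C_p(\EE[\|\bX_t\|^p]+1)$. For $p\ge 2$ I repeat the recursion with $\|\cdot\|^p$: raising the squared increment to the power $p/2$, taking $\EE[\,\cdot\mid\mathcal F_\tn]$, and applying Young's inequality exactly as in Lemma~\ref{lemma:SDE_stability} gives $\EE[\|\hX_\tnp\|^p\mid\mathcal F_\tn]\le\e^{-p\alpha h_n}\|\hX_\tn\|^p+C h_n$, whence $\e^{p\alpha\tn}(\|\hX_\tn\|^p-R_p)$ is again a supermartingale and the same stopping argument applies. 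I expect the main obstacle to be precisely this deterministic-time step: the random timesteps make ``value at the $n$-th grid point'' and ``value at time $t$'' genuinely different objects, and it is the elapsed-time (not step-count) nature of the contraction, together with the $\hmax$ control of the last partial step, that produces a bound uniform in $t$ — the feature distinguishing this result from the finite-$T$ analysis of \cite{Part1}. The secondary points requiring care are the non-Zeno property $t_n\to\infty$ (so that $n_t<\infty$) and the integrability needed to justify optional stopping.
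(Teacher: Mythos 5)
Your proposal is correct, but it follows a genuinely different route from the paper. The paper keeps the martingale terms pathwise: it sums the one-step inequalities for $\e^{2\alpha t_n}\|\hX_{t_n}\|^2$, raises to the power $p/2$, takes the running supremum and then the expectation, and controls the resulting five terms with the Burkholder--Davis--Gundy, Jensen and Young inequalities before absorbing a $\fracs{1}{2}\,\EE[\hM_t^{\alpha,p}]$ term; this yields the stronger statement $\EE[\sup_{s\le t}\e^{\alpha p s}\|\hX_s\|^p]\le C\e^{\alpha p t}$, from which the fixed-time bound follows on dividing by $\e^{\alpha p t}$. You instead take conditional expectations step by step so the martingale terms vanish immediately, turn $\e^{2\alpha t_n}(\|\hX_{t_n}\|^p-R_p)$ into a supermartingale, and use optional stopping at the ($\mathcal F_{t_n}$-)stopping time $n_t$ to pass to deterministic time; this avoids BDG entirely and is more elementary, at the price of delivering only fixed-time moments rather than the weighted supremum (which is all the theorem asserts, and the sup bound is not needed elsewhere in the paper). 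Both arguments rely on the same two structural points you correctly isolate: the timestep condition \eqref{eq:timestep} is tuned to the $h_n^2\|f\|^2$ term in the squared recursion, and $\hmax$ controls the weight ratio $\e^{2\alpha t}/\e^{2\alpha\td}$ over the final partial step. The only place where your sketch is thinner than it should be is the one-step recursion for general $p\ge 4$: after writing $\|\hX_\tnp\|^2\le A+B$ with $A=\e^{-2\alpha h_n}\|\hX_\tn\|^2+2\beta h_n$ and $B$ the noise terms, the expansion of $(A+B)^{p/2}$ produces cross terms such as $A^{p/2-2}\,\EE[B^2\mid\mathcal F_\tn]\sim h_n\|\hX_\tn\|^{p-2}(\|\hX_\tn\|^2+1)$, which must be absorbed by Young's inequality into $\epsilon\, h_n\|\hX_\tn\|^p$ at the cost of replacing $\alpha$ by, say, $\alpha/2$ in the exponent of the contraction factor; ``exactly as in Lemma~\ref{lemma:SDE_stability}'' glosses over this discrete Taylor step, but it is standard and does go through, so this is a presentational gap rather than a mathematical one. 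Your flagged side conditions (a.s.\ finiteness of $n_t$, integrability for optional stopping) are handled the same way the paper handles them, by citing Part I for attainability and by a trivial induction for integrability.
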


\begin{proof}
The proof is deferred to Section \ref{sec:proofs}.
\end{proof}

To bound the expected number of timesteps, we require an assumption 
on how quickly $h(x)$ can approach zero as $\|x\|\rightarrow\infty$.

\begin{assumption}[Timestep lower bound]
\label{assp:lower_bound}
There exist constants $\xi,\zeta,q\!>\!0$, such that the adaptive timestep 
function satisfies the inequality
\[
h(x) \geq \left( \xi \|x\|^q + \zeta \right)^{-1}.
\]
\end{assumption}

Given this assumption, we obtain the following lemma.

\begin{lemma}[Bounded timestep moments]
\label{lemma: BoundedTimestep}
If the SDE satisfies Assumption \ref{assp:linear_growth}, and the 
timestep function $h$ satisfies Assumptions \ref{assp:timestep}
and \ref{assp:lower_bound}, then for all $T,p\in(0,\infty)$ there exists a constant $C_{h,p}$  which 
depends on $p$ and $C_p$ in Theorem \ref{thm:stability} such that
\[
\EE\left[ (N_T-1)^p \right] < C_{h,p}\,T^p.
\]
where
$\displaystyle
N_T = \min\{n: t_n\geq T\}
$
is the number of timesteps required by a path approximation.
\end{lemma}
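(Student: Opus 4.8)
The plan is to convert the counting of timesteps into a pathwise integral of the piecewise-constant interpolant $\bX_t$, and then to control that integral using the uniform-in-time moment bound supplied by Theorem~\ref{thm:stability}. The whole argument is essentially a single pathwise estimate followed by one application of Minkowski's integral inequality.

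First I would record the pathwise bound. Since $N_T=\min\{n:t_n\geq T\}$, we have $t_{N_T-1}<T$, and $t_{N_T-1}=\sum_{n=0}^{N_T-2}h_n$ is a sum of exactly $N_T-1$ terms. Assumption~\ref{assp:lower_bound}, in the form $1\leq h_n(\xi\|\hX_\tn\|^q+\zeta)$ with $h_n=h(\hX_\tn)$, summed over $n=0,\ldots,N_T-2$, gives
\[
N_T-1 \;\leq\; \sum_{n=0}^{N_T-2} h_n\left(\xi\|\hX_\tn\|^q+\zeta\right)
= \int_0^{t_{N_T-1}} \left(\xi\|\bX_t\|^q+\zeta\right)\D t
\;\leq\; \int_0^{T} \left(\xi\|\bX_t\|^q+\zeta\right)\D t ,
\]
where the middle identity uses $\bX_t=\hX_\td$, and the final inequality uses $t_{N_T-1}<T$ together with nonnegativity of the integrand. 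Crucially this holds for every realisation, so the random upper limit $N_T$ is removed before any expectation is taken.

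Second I would take the $L^p$ norm, for $p\geq 1$, of both sides. Applying Minkowski's integral inequality to the nonnegative integrand and then Theorem~\ref{thm:stability} with exponent $qp$ yields
\[
\left(\EE\left[(N_T-1)^p\right]\right)^{1/p}
\leq \xi \int_0^T \left(\EE\left[\|\bX_t\|^{qp}\right]\right)^{1/p}\D t + \zeta T
\leq \left(\xi\,C_{qp}^{1/p}+\zeta\right) T ,
\]
since $\EE[\|\bX_t\|^{qp}]\leq C_{qp}$ uniformly in $t$. Raising to the $p$-th power gives the claim with $C_{h,p}=(\xi\,C_{qp}^{1/p}+\zeta)^p$, which depends only on $p$, $\xi$, $\zeta$, $q$ and the constant $C_{qp}$ of Theorem~\ref{thm:stability}. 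The range $0<p<1$ then follows from Jensen's inequality applied to the concave map $x\mapsto x^p$, exactly as the sub-quadratic case is handled in Lemma~\ref{lemma:SDE_stability}.

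The only genuine subtlety, and the step I expect to require the most care, is the passage from the discrete count to the integral against $\bX_t$: the number of summands $N_T-1$ is itself a random variable, so one must keep the inequality pathwise and exploit $t_{N_T-1}<T$ before integrating in $\omega$. Once that is done, the decisive ingredient is the uniform-in-time moment bound of Theorem~\ref{thm:stability}: it is precisely this uniformity that prevents an exponential-in-$T$ blow-up and delivers the clean $O(T^p)$ growth asserted in the lemma.
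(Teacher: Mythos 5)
Your proposal is correct and follows essentially the same route as the paper: the pathwise conversion of $N_T-1$ into $\int_0^T(\xi\|\bX_t\|^q+\zeta)\,\D t$ via Assumption~\ref{assp:lower_bound}, followed by the uniform-in-$t$ moment bound of Theorem~\ref{thm:stability}, is exactly the paper's argument. The only difference is that you use Minkowski's integral inequality where the paper uses the Jensen inequality (\ref{Jensen exp}) to pull the $p$-th power inside the time integral; both give the same $O(T^p)$ constant.
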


\begin{proof}
By Assumption \ref{assp:lower_bound}, we have
\[
N_T=\sum_{k=1}^{n_T}1=\sum_{k=1}^{n_T}\frac{h(\hX_{t_k})}{h(\hX_{t_k})}=\int_0^{\Td}\frac{1}{h(\bX_t)} \D t+1\leq \int_0^{T} (\xi \|\bX_t\|^q + \zeta) \D t+1
\]
Therefore, by Jensen's inequality, we obtain
\[
\EE\left[(N_T-1)^p\right]\leq T^{p-1}\int_0^T\EE\left[\left(\xi \|\bX_t\|^q+\zeta\right)^{p}\right]\D t 
\]
and the result is then an immediate consequence of Theorem \ref{thm:stability}.
\end{proof}

\subsection{Strong convergence}

Standard strong convergence analysis for an approximation 
with a uniform timestep $h$ considers the limit $h\!\rightarrow\!0$.
This clearly needs to be modified when using an adaptive timestep,
and we will instead consider a timestep function $h^\delta$ controlled
by a scalar parameter $0\!<\!\delta\!\leq\!1$, and consider the
limit $\delta\!\rightarrow\!0$.

In our analysis, we will make the 
following assumption.

\begin{assumption}
\label{assp:timestep_delta}

The timestep function $h^\delta$ satisfies the inequalities
\begin{equation}
\label{eq:h_delta}
\delta\, \min(\hmax, h(x)) \leq h^\delta(x) \leq \min( \delta\,\hmax, h(x) ), 
\end{equation}
and $h$ satisfies Assumption \ref{assp:timestep}.
\end{assumption}

To prove an order of strong convergence requires new assumptions 
on $f$ and $g$:

\begin{assumption}[Contractive Lipschitz properties]

\label{assp:ContractionLipschitz}

For some fixed $p^*\in[2,\infty),$ there exist constants $\lambda,\eta\!>\!0$ such that for all $x,y\in\RR^m$,
$f$ and $g$ satisfy the contractive Lipschitz condition:
\begin{equation}
\langle x\!-\!y,f(x)\!-\!f(y)\rangle\!+\frac{p^*-1}{2}\,\|g(x)\!-\!g(y)\|^2\! \leq -\lambda\,\|x\!-\!y\|^2,
\label{eq:contraction_Lipschitz}
\end{equation}
and $g$ satisfies the Lipschitz condition:
\begin{equation}
\|g(x)\!-\!g(y)\|^2\! \leq \eta\,\|x\!-\!y\|^2,
\label{eq:g_Lipschitz}
\end{equation}
In addition, $f$ satisfies the local polynomial growth Lipschitz condition
\begin{equation}
\|f(x)\!-\!f(y)\| \leq \left(\gamma\, (\|x\|^q \!+\! \|y\|^q) + \mu\right) \, \|x\!-\!y\|,
\label{eq:local_Lipschitz}
\end{equation}
for some $\gamma, \mu, q > 0$.
\end{assumption}

Note that we will prove that this Assumption ensures that two solutions to this SDE starting from different places but driven by the same Brownian motion, will come together exponentially. That means the error made on previous time steps will decay exponentially and then we can prove a uniform bound for the strong error. If the drift and volatility are differentiable, the following assumption is equivalent to Assumption 
\ref{assp:ContractionLipschitz}, and usually easier to check in practice.

\begin{assumption}[Contractive Lipschitz properties]

\label{assp:Lipschitz_diff}
For some fixed $p^*\in[2,\infty),$
there exists a constant $\lambda\!>\!0$ such that for 
all $x,e\in\RR^m$ with $\|e\|\!=\!1$,
$f$ and $g$ are differentiable and satisfy the contractive Lipschitz condition:
\begin{equation}
\langle e ,\nabla f(x)\, e \rangle+\frac{p^*-1}{2}\,\| \nabla g(x)\|^2 \leq -\lambda
\label{eq:contraction_Lipschitz_diff}
\end{equation}
and $g$ satisfies the Lipschitz condition:
\begin{equation}
\|\nabla g(x)\|^2\! \leq \eta,
\label{eq:g_Lipschitz_diff}
\end{equation}
and in addition $f$ satisfies the local polynomial growth 
Lipschitz condition
\begin{equation}
\|\nabla f(x) \| \leq  2\,\gamma\, \|x\|^q + \mu,
\label{eq:local_Lipschitz_diff}
\end{equation}
for some $\gamma, \mu, q > 0$.
\end{assumption}
\begin{lemma}[SDE contractivity]

\label{lemma:SDE_contractivity}

If the SDE satisfies Assumption \ref{assp:ContractionLipschitz},
then for $p\in[2,p^*]$ any two solutions to the SDE: $X_t$ and $Y_t,$ driven by the same Brownian motion but starting from $x_0$ and $y_0$, where $x_0\neq y_0$, satisfy, $\forall\ t>0,$  
\[
\EE\left[ 
 \|X_t-Y_t\|^p \right] \leq \e^{-\lambda p t}\,\EE\left[\|X_0-Y_0\|^p\right] .
\]
\end{lemma}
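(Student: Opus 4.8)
The plan is to apply Itô's formula directly to the process $\|X_t - Y_t\|^p$ and exploit the contractive Lipschitz condition (\ref{eq:contraction_Lipschitz}) to obtain a differential inequality with an exponentially decaying solution. First I would set $Z_t \triangleq X_t - Y_t$, which satisfies the SDE $\D Z_t = (f(X_t)-f(Y_t))\,\D t + (g(X_t)-g(Y_t))\,\D W_t$, since the two solutions are driven by the \emph{same} Brownian motion and so the stochastic integrands subtract cleanly. I would then compute $\D\|Z_t\|^p$ via Itô's formula applied to the function $\phi(z)=\|z\|^p$. The first derivatives give a drift term $p\,\|Z_t\|^{p-2}\langle Z_t, f(X_t)-f(Y_t)\rangle$ and a martingale term $p\,\|Z_t\|^{p-2}\langle Z_t,(g(X_t)-g(Y_t))\,\D W_t\rangle$; the Hessian contributes the second-order correction from the volatility.

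The crucial step is to organise the drift and the Itô-correction terms so that the combination matches the left-hand side of (\ref{eq:contraction_Lipschitz}). The second-order term from Itô's formula, when $\|Z_t\|\neq 0$, splits into a piece $\frac{p(p-2)}{2}\|Z_t\|^{p-4}\|(g(X_t)-g(Y_t))^{\!\top}Z_t\|^2$ coming from the outer-product structure of $zz^{\!\top}/\|z\|$, plus a piece $\frac{p}{2}\|Z_t\|^{p-2}\|g(X_t)-g(Y_t)\|^2$ from the trace of the Hessian times the diffusion matrix. The first of these can be bounded by $\frac{p(p-2)}{2}\|Z_t\|^{p-2}\|g(X_t)-g(Y_t)\|^2$ using Cauchy–Schwarz, so that after factoring out $p\,\|Z_t\|^{p-2}$ the bracketed expression becomes exactly $\langle Z_t, f(X_t)-f(Y_t)\rangle + \frac{p-1}{2}\|g(X_t)-g(Y_t)\|^2$. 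Since $p\leq p^*$, this is dominated by $\langle Z_t, f(X_t)-f(Y_t)\rangle + \frac{p^*-1}{2}\|g(X_t)-g(Y_t)\|^2 \leq -\lambda\|Z_t\|^2$ by (\ref{eq:contraction_Lipschitz}), yielding a drift bounded by $-\lambda p\,\|Z_t\|^p$.

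After taking expectations — the martingale term vanishing once one checks it is a true martingale, which follows from the moment bounds of Lemma \ref{lemma:SDE_stability} — I would arrive at the differential inequality $\frac{\D}{\D t}\EE[\|Z_t\|^p] \leq -\lambda p\,\EE[\|Z_t\|^p]$, and Grönwall's inequality then gives $\EE[\|Z_t\|^p]\leq \e^{-\lambda p t}\,\EE[\|Z_0\|^p]$, which is the claim.

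I expect the main obstacle to be the technical handling of the point $Z_t = 0$, where $\phi(z)=\|z\|^p$ is not twice differentiable for $p<2$ (though here $p\geq 2$, the regularity is still delicate at the origin for $p$ close to $2$). The clean way to deal with this is to work instead with the smooth function $(\|Z_t\|^2+\varepsilon)^{p/2}$, carry out the Itô computation and the estimates above uniformly in $\varepsilon$, and then pass to the limit $\varepsilon\to 0$ by monotone or dominated convergence, justifying the latter through the uniform moment bounds. The other point requiring care is verifying the martingale property of the stochastic integral so that its expectation is zero; this again rests on the finiteness of the relevant moments guaranteed by the stability estimates.
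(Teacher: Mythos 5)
Your proposal is correct and follows essentially the same route as the paper: apply It\^{o}'s formula to $\|X_t-Y_t\|^p$, combine the drift and second-order terms into $\langle e_s,f(X_s)-f(Y_s)\rangle+\frac{p-1}{2}\|g(X_s)-g(Y_s)\|^2$, invoke (\ref{eq:contraction_Lipschitz}) with $p\leq p^*$, and kill the martingale term after taking expectations. The only cosmetic difference is that the paper bakes the factor $\e^{\lambda p t}$ into the It\^{o} computation as an integrating factor rather than deriving a differential inequality and applying Gr\"{o}nwall, and your extra care with the Hessian at the origin and the $\varepsilon$-regularisation is a legitimate refinement the paper glosses over.
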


\begin{proof}
First, we can define $e_t\triangleq X_t-Y_t,$ and since $X_t$ and $Y_t$ are driven by the same Brownian motion, we get
\[\D e_t = \left(f(X_t)-f(Y_t)\right)\D t+\left(g(X_t)-g(Y_t)\right)\D W_t\]
By It\^{o}'s formula, we have for any $0< t\leq T,$
\begin{eqnarray*}
\e^{\lambda p t}\|e_t\|^p-\|e_0\|^p\!\!\!\!\! &\leq& \!\!\!\!\! \int_0^t \lambda p\, \e^{\lambda p s}\|e_s\|^p\,\D s  +\int_0^t p \langle e_s, f(X_s)-f(Y_s) \rangle \e^{\lambda p s}\|e_s\|^{p-2}\,\D s \\
&&+\int_0^t \frac{p(p-1)}{2}\|g(X_s)-g(Y_s)\|^2 \e^{\lambda p s}\|e_s\|^{p-2} \D s\\
&&+ \int_0^t p\,\e^{\lambda p s} \|e_s\|^{p-2}\langle e_s, \left(g(X_s)-g(Y_s)\right)\D W_s \rangle.
\end{eqnarray*}
Therefore, by taking expectations on both sides and using the contractive Lipschitz property (\ref{eq:contraction_Lipschitz}), we obtain that
\[\EE\left[\e^{\lambda p t}\|e_t\|^p\right]-\EE\left[\|e_0\|^p\right]\leq 0.\]
\end{proof}

\begin{theorem}[Strong convergence order]

\label{thm:convergence_order}

If the SDE satisfies Assumption \ref{assp:ContractionLipschitz}, and the timestep 
function $h^\delta$ satisfies Assumption \ref{assp:timestep_delta},
then for all $p\in(0,p^*]$ there exists a constant $C_{p}$ such that, $\forall t\geq 0,$
\[
\EE\left[ \, \|\hX_t\!-\!X_t\|^p \right] \leq C_{p} \, \delta^{p/2}.
\]
\end{theorem}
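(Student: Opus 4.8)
The plan is to combine the exponential contractivity of the exact SDE (Lemma~\ref{lemma:SDE_contractivity}) with a one-step consistency estimate for the continuous interpolant $\hX_t$, so that the local errors introduced on each timestep decay geometrically rather than accumulating. The key conceptual point is that, because two exact solutions driven by the same Brownian motion come together at rate $\e^{-\lambda t}$, an error committed at an early timestep contributes only a factor $\e^{-\lambda(t-s)}$ at time $t$; summing these contributions gives a \emph{uniform-in-$t$} bound rather than the exponentially growing bound one gets from a naive Gr\"onwall argument.

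First I would set up a comparison process. The natural device is to introduce, at each time point $t_n$, the exact solution $X^{(n)}_t$ of the SDE~(\ref{SDE}) started from the numerical value $\hX_{t_n}$ at time $t_n$ and driven by the same Brownian increments thereafter. Then $X^{(n)}_t$ and $X^{(n+1)}_t$ are two exact solutions that agree except for the one-step discrepancy accumulated over $[t_n,t_{n+1}]$, so Lemma~\ref{lemma:SDE_contractivity} controls $\EE[\|X^{(n)}_t - X^{(n+1)}_t\|^p] \leq \e^{-\lambda p (t-t_{n+1})}\,\EE[\|X^{(n+1)}_{t_{n+1}} - X^{(n)}_{t_{n+1}}\|^p]$. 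Writing the global error as a telescoping sum $\hX_t - X_t = \sum_n (X^{(n+1)}_t - X^{(n)}_t)$ (identifying $\hX$ with the last term and $X$ with the first), the problem reduces to bounding each one-step error $X^{(n+1)}_{t_{n+1}} - X^{(n)}_{t_{n+1}}$ in the $p$-norm.

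Second, I would establish the one-step local error estimate: on a single step, the continuous interpolant~(\ref{eq:con_SDE}) differs from the exact solution started at the same point by an amount of size $O(h_n^{p/2 + 1/2})$ or so in the $p$-th mean, the gain coming from the martingale/It\^o-Taylor structure of the Euler-Maruyama scheme. Here the frozen coefficients $f(\bX_t), g(\bX_t)$ must be compared with $f(X^{(n)}_t), g(X^{(n)}_t)$, and the polynomial growth condition~(\ref{eq:local_Lipschitz}) together with the uniform moment bounds from Theorem~\ref{thm:stability} (valid for both $\hX$ and $\bX$) is what keeps the constants uniform in $t$. The per-step error must scale like $h^\delta$ raised to an appropriate power; since $h^\delta \leq \delta\,\hmax$, and roughly $t/\delta$ steps occur, the geometric weighting $\sum_n \e^{-\lambda p(t-t_{n+1})}$ contributes a factor $\sim 1/(1-\e^{-\lambda \delta \hmax}) \sim 1/(\lambda\delta\hmax)$, which must combine with the per-step $\delta$-power to leave exactly $\delta^{p/2}$.

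The main obstacle, and the delicate bookkeeping, will be making the error weighting and the per-step size balance so as to yield precisely the stated $\delta^{p/2}$ order while keeping all constants independent of $t$. Because the timesteps are \emph{adaptive} and random (depending on $\hX_{t_n}$), the geometric sum $\sum_n \e^{-\lambda p(t-t_{n+1})}$ has random nodes, so I expect one needs to pass to expectations carefully---controlling the local error conditionally on $\mathcal{F}_{t_n}$ and then using the tower property, while invoking the lower bound~(\ref{eq:h_delta}), namely $h^\delta(x)\geq \delta\min(\hmax,h(x))$, to ensure the steps are not so small that the number of terms blows up faster than the decay can absorb. Handling the interplay between the random step sizes, the polynomial-growth constants, and the exponential decay---all uniformly in $t$---is where the real work lies; the triangle/Minkowski inequality in $L^p$ and H\"older for the final $0<p<2$ case are then routine.
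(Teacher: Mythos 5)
Your route is genuinely different from the paper's: the paper never telescopes over shadow solutions, but instead applies It\^o's formula directly to the weighted error $\e^{\lambda p t/2}\|\hX_t-X_t\|^p$, splits $f(\bX_s)-f(X_s)$ as $(f(\hX_s)-f(X_s))-(f(\hX_s)-f(\bX_s))$, absorbs the first piece with the contraction (\ref{eq:contraction_Lipschitz}) via Young's inequality, and bounds the residual by $\EE[\|\hX_s-\bX_s\|^{2p}]=O(\delta^p)$; the half order then comes out of a single deterministic integral $\int_0^t\e^{\lambda ps/2}\,\D s$ with no per-step accounting. Your local-error-propagation scheme is the classical alternative (Milstein's ``fundamental theorem'' style) and can be made to work, but as written it has a genuine gap in the accumulation step. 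The one-step discrepancy $X^{(n+1)}_{t_{n+1}}-X^{(n)}_{t_{n+1}}$ is dominated, when $g$ is non-constant, by the stochastic integral $\int_{t_n}^{t_{n+1}}(g(\hX_{t_n})-g(X^{(n)}_s))\,\D W_s$, whose $L^p$ size is $O(h_n)$ (not $O(h_n^{3/2})$, and not the $O(h_n^{1/2+1/(2p)})$ your stated $p$-th-mean scaling would give). Summing $O(h)$ local errors linearly by Minkowski against the geometric weights, $\sum_n \e^{-\lambda(t-t_{n+1})}\cdot O(h)\approx (\lambda h)^{-1}\cdot O(h)=O(1)$, yields no convergence at all: the triangle inequality loses exactly the half order you need.

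The missing idea is the bias/martingale splitting of each local error: the conditional mean given $\mathcal{F}_{t_n}$ is $O(h_n^{3/2})$ and may be summed linearly, while the conditionally centered parts must be accumulated in quadrature (via orthogonality or a discrete Burkholder--Davis--Gundy argument), giving $\bigl((\lambda h)^{-1}\cdot O(h^2)\bigr)^{1/2}=O(h^{1/2})$. Carrying this through your telescoping is not cosmetic work, because the increments $X^{(n+1)}_t-X^{(n)}_t$ evaluated at the common terminal time $t$ are not martingale differences in $n$ (each depends on the whole Brownian path up to $t$), so one must propagate the conditional centering through the contractive flow map --- essentially requiring a mean-square contraction statement for the solution map applied to conditionally centered perturbations, which is stronger than Lemma \ref{lemma:SDE_contractivity} as stated. (A second, smaller point: Lemma \ref{lemma:SDE_contractivity} starts both solutions at time $0$; you need its conditional version started at the random times $t_{n+1}$, which is fine by the strong Markov property but should be said.) Without the quadrature argument the proposal proves only an $O(1)$ bound; I would either supply it explicitly or switch to the paper's direct weighted-It\^o computation, which sidesteps the issue entirely.
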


\begin{proof}
The proof is deferred to Section \ref{sec:proofs}.
\end{proof}
Note that this theorem implies the half-order weak convergence, but the numerical results shows the standard first order weak convergence. We do not provide the proof for the first order weak convergence since the strong convergence result is sufficient to extend this scheme to MLMC. 

\begin{lemma}[Number of timesteps]

\label{lemma:timesteps2}

If the SDE satisfies Assumption \ref{assp:linear_growth}, and the timestep 
function $h^\delta$ satisfies  Assumption \ref{assp:timestep_delta} with $h(x)$ satisfying Assumption \ref{assp:lower_bound},
then for all $T,p\in(0,\infty)$ there exists a constant $C_{h,p}$ same as in Lemma \ref{lemma: BoundedTimestep} such that
\[
\EE\left[ (N_T-1)^p \right] \leq C_{h,p}\,T^p \, \delta^{-p}.
\]
where $N_T$ is again the number of timesteps required by a path approximation.
\end{lemma}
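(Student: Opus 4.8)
The plan is to mirror the argument already used for Lemma~\ref{lemma: BoundedTimestep}, the only new ingredient being a lower bound on $h^\delta$ that carries an explicit factor of $\delta$. First I would verify that $h^\delta$ itself satisfies Assumption~\ref{assp:timestep}: the upper bound in (\ref{eq:h_delta}) gives $h^\delta(x)\leq h(x)$ pointwise, and by the remark following Assumption~\ref{assp:timestep} any timestep function dominated by $h$ inherits the inequality (\ref{eq:timestep}). Since also $h^\delta(x)\leq\delta\hmax\leq\hmax$, Theorem~\ref{thm:stability} applies to the approximation $\bX_t$ generated with timestep $h^\delta$, yielding the same uniform moment bounds $\EE[\|\bX_t\|^p]<C_p$ with the constants of that theorem. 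This is the step I would flag as the one worth checking carefully, since the whole conclusion rests on being entitled to invoke Theorem~\ref{thm:stability} for the $h^\delta$-scheme.

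Next I would derive the pointwise lower bound on $h^\delta$. Because $h$ maps into $(0,\hmax]$, we have $\min(\hmax,h(x))=h(x)$, so the left inequality of (\ref{eq:h_delta}) reads $h^\delta(x)\geq\delta\,h(x)$; combining this with Assumption~\ref{assp:lower_bound} gives
\[
\frac{1}{h^\delta(x)}\leq\frac{1}{\delta}\left(\xi\|x\|^q+\zeta\right).
\]
This is precisely the bound exploited in Lemma~\ref{lemma: BoundedTimestep}, inflated by the factor $\delta^{-1}$.

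Then I would repeat the counting estimate. Writing the number of steps as a telescoping sum and recognising it as $\int_0^{\Td} h^\delta(\bX_t)^{-1}\,\D t$ up to an additive constant, the lower bound above yields
\[
N_T\leq\frac{1}{\delta}\int_0^{T}\left(\xi\|\bX_t\|^q+\zeta\right)\D t+1.
\]
An application of Jensen's inequality, exactly as before, produces
\[
\EE\left[(N_T-1)^p\right]\leq\frac{T^{p-1}}{\delta^p}\int_0^T\EE\left[\left(\xi\|\bX_t\|^q+\zeta\right)^p\right]\D t,
\]
and the uniform-in-$t$ moment bound from Theorem~\ref{thm:stability} bounds the integrand by a constant independent of $t$, giving $\EE[(N_T-1)^p]\leq C_{h,p}\,T^p\,\delta^{-p}$ with the same $C_{h,p}$ as in Lemma~\ref{lemma: BoundedTimestep}.

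I do not expect a genuine obstacle here beyond bookkeeping: once the legitimacy of applying Theorem~\ref{thm:stability} to $h^\delta$ is secured, the extra $\delta^{-p}$ factor emerges directly from the lower bound $h^\delta\geq\delta\,h$, and every remaining manipulation is identical to the proof of the previous lemma.
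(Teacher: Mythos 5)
Your proposal is correct and follows essentially the same route as the paper: the paper's proof is precisely the observation that $h^\delta(x)\geq\delta\,h(x)\geq\delta(\xi\|x\|^q+\zeta)^{-1}$ followed by the argument of Lemma~\ref{lemma: BoundedTimestep} verbatim. Your extra care in checking that $h^\delta$ inherits Assumption~\ref{assp:timestep} (so that Theorem~\ref{thm:stability} applies to the $h^\delta$-scheme) is a point the paper leaves implicit, and it is handled correctly.
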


\begin{proof}
The proof is very similar to the proof of Lemma \ref{lemma: BoundedTimestep}, noting that
\[h^\delta (x)\geq \delta\, h(x)\geq \delta\left(\xi\|x\|^q+\zeta\right)^{-1},\]
due to Assumptions \ref{assp:lower_bound} and \ref{assp:timestep_delta}.
\end{proof}

First order strong convergence is achievable for Langevin SDEs
in which $m\!=\!d$ and $g$ is the identity matrix $I_m$, but 
this requires stronger assumptions on the drift $f$.

\begin{assumption}[Enhanced contractive Lipschitz properties]

\label{assp:enhanced_Lipschitz}

There exists a constant $\lambda\!>\!0$ such that for all $x,y\in\RR^m$,
$f$ satisfies the contractive one-sided Lipschitz condition:
\begin{equation}
\langle x\!-\!y,f(x)\!-\!f(y)\rangle \leq -\lambda\|x\!-\!y\|^2.
\label{eq:onesided_Lipschitz2}
\end{equation}
In addition, $f$ is differentiable, and $f$ and $\nabla f$ 
satisfy the local polynomial growth Lipschitz condition
\begin{equation}
\|f(x)\!-\!f(y)\| + \|\nabla f(x)\!-\!\nabla f(y)\|\leq 
\left( \gamma\, (\|x\|^q \!+\! \|y\|^q) + \mu\right) \|x\!-\!y\|,
\label{eq:local_Lipschitz2}
\end{equation}
for some $\gamma, \mu, q > 0$.
\end{assumption}

\begin{lemma}
\label{lemma:useful}
If $f$ satisfies Assumption \ref{assp:enhanced_Lipschitz},
then for any $x,y,v \in \RR^m$\\[-0.1in]
\[
\langle v, f(x)\!-\!f(y) \rangle = 
\langle v, (x\!-\!y)\cdot\nabla f(x) \rangle + R(x,y,v),
\]
where the remainder term has the bound
\[
|R(x,y,v)| \leq \left( \gamma\, (\|x\|^q \!+\! \|y\|^q) + \mu\right) \|v\| \, \|x\!-\!y\|^2.
\]
\end{lemma}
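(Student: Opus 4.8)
The plan is to recognise the claimed identity as the first-order Taylor expansion of $f$ about the point $x$, with the remainder controlled by the local polynomial-growth Lipschitz bound on $\nabla f$ supplied by Assumption \ref{assp:enhanced_Lipschitz}. Since $f$ is differentiable and (\ref{eq:local_Lipschitz2}) forces $\nabla f$ to be continuous, the fundamental theorem of calculus applied along the straight segment $z_s = x + s(y-x)$, $s\in[0,1]$, yields
\[
f(x) - f(y) = \int_0^1 (x-y)\cdot\nabla f(z_s)\,\D s .
\]
Adding and subtracting the linearisation at $x$, I would simply \emph{define}
\[
R(x,y,v) = \langle v, f(x)-f(y)\rangle - \langle v, (x-y)\cdot\nabla f(x)\rangle = \int_0^1 \langle v, (x-y)\cdot(\nabla f(z_s)-\nabla f(x))\rangle\,\D s,
\]
so that the identity in the statement holds by construction and only the bound on $|R|$ remains to be established.

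For the bound I would estimate inside the integral. Using $\|A w\| \le \|A\|\,\|w\|$ for the Frobenius norm together with Cauchy--Schwarz gives $|\langle v, (x-y)\cdot(\nabla f(z_s)-\nabla f(x))\rangle| \le \|v\|\,\|x-y\|\,\|\nabla f(z_s)-\nabla f(x)\|$, and then the gradient part of (\ref{eq:local_Lipschitz2}) gives $\|\nabla f(z_s)-\nabla f(x)\| \le (\gamma(\|z_s\|^q+\|x\|^q)+\mu)\,\|z_s-x\|$. Because $z_s - x = s(y-x)$, this contributes an explicit factor $s\,\|x-y\|$, so that
\[
|R(x,y,v)| \le \|v\|\,\|x-y\|^2 \int_0^1 s\,(\gamma(\|z_s\|^q+\|x\|^q)+\mu)\,\D s .
\]

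The only step needing genuine care is showing that this $s$-integral collapses to exactly the coefficient $\gamma(\|x\|^q+\|y\|^q)+\mu$ rather than a constant multiple of it, which is where the clean constant in the statement comes from. The key elementary estimate is $\|z_s\| \le (1-s)\|x\|+s\|y\| \le \max(\|x\|,\|y\|)$, whence $\|z_s\|^q \le \|x\|^q+\|y\|^q$ for \emph{every} $q>0$; combined with $\int_0^1 s\,\D s = \halfs$ this gives $\halfs\big(\gamma(2\|x\|^q+\|y\|^q)+\mu\big) = \gamma\|x\|^q + \halfs\gamma\|y\|^q + \halfs\mu \le \gamma(\|x\|^q+\|y\|^q)+\mu$, and the factor $\half$ is precisely what absorbs the doubling of the $\|x\|^q$ term. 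Thus the whole argument is routine; the only things to get exactly right are this final bookkeeping of constants and the remark that continuity of $\nabla f$ legitimises the integral representation.
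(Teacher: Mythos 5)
Your proof is correct and follows essentially the same route as the paper's: both expand $f(x)-f(y)$ along the segment joining $x$ and $y$ and control the remainder via the polynomial-growth Lipschitz bound on $\nabla f$, the only difference being that the paper applies the Mean Value Theorem to the scalar function $\lambda\mapsto\langle v,f(y+\lambda(x-y))\rangle$ at a single intermediate point while you use the integral form of the remainder. Your version is, if anything, tidier on the constants, since the factor $\int_0^1 s\,\D s=\frac{1}{2}$ cleanly absorbs the doubled $\|x\|^q$ term --- a bookkeeping step the paper's one-line conclusion glosses over.
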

\begin{proof}
If we define the scalar function $u(\lambda)$ for $0\!\leq\!\lambda\!\leq\!1$ by
\[
u(\lambda) = \langle v, f(y+\lambda(x\!-\!y)) \rangle,
\]
then $u(\lambda)$ is continuously differentiable, and by the Mean Value Theorem
$u(1)\!-\!u(0)= u'(\lambda^*)$ for some $0\!<\!\lambda^*\!<\!1$, which implies 
that
\[
\langle v, f(x)\!-\!f(y) \rangle = \langle v, (x\!-\!y)\cdot \nabla f(y+\lambda^*(x\!-\!y)) \rangle.
\]
The final result then follows from the Lipschitz property of $\nabla f$.
\end{proof}

We now state the theorem on improved strong convergence.

\begin{theorem}[Strong convergence for Langevin SDEs]

\label{thm:convergence_order2}

If $m\!=\!d$, $g\equiv I_m$, $f$ satisfies 
Assumption \ref{assp:enhanced_Lipschitz}, 
and the timestep function $h^\delta$ satisfies Assumption 
\ref{assp:timestep_delta}, then for all $p\in(0,\infty)$ 
there exists a constant $C_{p}$ such that, $\forall\ t\geq 0,$
\[
\EE\left[ \|\hX_t\!-\!X_t\|^p \right] \leq C_{p} \, \delta^p.
\]
\end{theorem}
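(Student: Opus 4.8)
The plan is to exploit the crucial simplification that when $g\equiv I_m$ the diffusion terms of $X_t$ and $\hX_t$ coincide, so that the error $\te_t \triangleq \hX_t - X_t$ obeys the \emph{driftless} equation $\D\te_t = (f(\bX_t)-f(X_t))\,\D t$, with no stochastic integral. Since $\te_0=0$ and, by Jensen's inequality, $\EE[\|\te_t\|^p]\leq(\EE[\|\te_t\|^{p'}])^{p/p'}$ for $p\leq p'$, it suffices to establish the bound for every $p\geq 2$. I would therefore fix $p\geq 2$ and study $\EE[\|\te_t\|^p]$ through its time derivative. Applying It\^o's formula (here merely the chain rule, as there is no martingale part),
\[
\frac{\D}{\D t}\EE[\|\te_t\|^p] = p\,\EE\!\left[\|\te_t\|^{p-2}\langle \te_t, f(\bX_t)-f(X_t)\rangle\right].
\]
I would split $f(\bX_t)-f(X_t) = (f(\hX_t)-f(X_t)) + (f(\bX_t)-f(\hX_t))$. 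The first bracket is controlled by the contractive one-sided Lipschitz condition (\ref{eq:onesided_Lipschitz2}), giving $\langle \te_t, f(\hX_t)-f(X_t)\rangle \leq -\lambda\|\te_t\|^2$, which after multiplication produces the dissipative term $-\lambda p\,\EE[\|\te_t\|^p]$.

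For the second bracket I would invoke Lemma \ref{lemma:useful} with $x=\bX_t$ and $y=\hX_t$, writing $\hX_t - \bX_t = \Delta_t \triangleq f(\bX_t)(t-\td) + (W_t - W_\td)$, so that
\[
\langle \te_t, f(\bX_t)-f(\hX_t)\rangle = -\langle \te_t, \nabla f(\bX_t)\,\Delta_t\rangle + R_t,\qquad |R_t|\leq \bigl(\gamma(\|\bX_t\|^q+\|\hX_t\|^q)+\mu\bigr)\,\|\te_t\|\,\|\Delta_t\|^2.
\]
The remainder $R_t$ and the deterministic part of $\Delta_t$ (of size $O(\delta)$ since $t-\td\leq\delta\hmax$) are benign: after a Young inequality of the form $\|\te_t\|^{p-1}\|\Delta_t\|^2 \leq \epsilon\|\te_t\|^p + C_\epsilon(\mathrm{poly})\|\Delta_t\|^{2p}$, the $\|\te_t\|^p$ part carries a small coefficient that I absorb into $-\lambda p$, while $\EE[(\mathrm{poly})\|\Delta_t\|^{2p}]=O(\delta^p)$ follows from $\EE[\|W_t-W_\td\|^{2p}]\lesssim(t-\td)^p$, H\"older's inequality, and the uniform moment bounds of Lemma \ref{lemma:SDE_stability} and Theorem \ref{thm:stability}.

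The genuine difficulty is the term involving the Brownian increment, $-p\,\EE[\|\te_t\|^{p-2}\langle \te_t, \nabla f(\bX_t)(W_t-W_\td)\rangle]$: a direct estimate gives only $O(\delta^{p-1/2})$, half an order short. Here I would exploit the mean-zero structure. Writing $\|\te_t\|^{p-2}\te_t = \|\te_\td\|^{p-2}\te_\td + E_t$, the leading, $\mathcal{F}_\td$-measurable part has vanishing contribution, because $\nabla f(\bX_t)=\nabla f(\hX_\td)$ is also $\mathcal{F}_\td$-measurable (the adaptive timestep keeps $\hX_\td$ adapted) and $\EE[\,W_t-W_\td\mid\mathcal{F}_\td\,]=0$. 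The correction satisfies $\|E_t\|\lesssim (\|\te_t\|^{p-2}+\|\te_\td\|^{p-2})\,\|\te_t-\te_\td\|$, valid for $p\geq 2$ since $z\mapsto\|z\|^{p-2}z$ is then $C^1$, and $\te_t-\te_\td=\int_\td^t (f(\bX_s)-f(X_s))\,\D s$ carries an extra factor $O(\delta)$; combined with $\|W_t-W_\td\|=O(\sqrt\delta)$ this recovers the missing half order and, after further Young and H\"older estimates, yields an $O(\delta^p)$ bound. Collecting everything gives the differential inequality $\frac{\D}{\D t}\EE[\|\te_t\|^p]\leq -c\,\EE[\|\te_t\|^p]+C\,\delta^p$ for some $c>0$, and since $\te_0=0$, Gr\"onwall's inequality delivers $\EE[\|\te_t\|^p]\leq C_p\,\delta^p$ uniformly in $t\geq 0$. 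I expect the control of this Brownian term — in particular the bookkeeping needed to show $E_t$ really is smaller by a full factor $\delta^{1/2}$, and to keep the aggregate coefficient of $\|\te_t\|^p$ strictly below $\lambda p$ for every $\delta\in(0,1]$ — to be the main obstacle.
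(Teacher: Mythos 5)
Your overall strategy is sound and lands on the same two pillars as the paper's proof: the one-sided Lipschitz condition kills the $f(\hX_t)-f(X_t)$ part, and Lemma \ref{lemma:useful} reduces the troublesome $f(\hX_t)-f(\bX_t)$ part to a deterministic $O(\delta)$ piece, a quadratic remainder, and a term linear in $W_t-W_\td$ whose martingale structure must be exploited to gain the missing half order. Where you genuinely diverge is in the machinery. The paper works with the exponentially weighted pathwise quantity $\e^{2\lambda t}\|e_t\|^2$ raised to the power $p/2$, takes a supremum inside the expectation, and handles the critical term $\int \e^{2\lambda\ud}\langle e_\ud,\nabla f(\bX_\ud)(W_u-W_\ud)\rangle\,\D u$ by the integration-by-parts identity $\int_{t_n}^{t_{n+1}}(W_u-W_{t_n})\,\D u=-\int_{t_n}^{t_{n+1}}(u-t_{n+1})\,\D W_u$, which converts it into a stochastic integral with an $O(\delta)$ integrand that BDG then controls at order $\delta^{p/2}$ (this forces the supremum formulation, and produces an extra term $I_3$ from the weight $\e^{2\lambda u}-\e^{2\lambda\ud}$). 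You instead differentiate $\EE[\|\te_t\|^p]$ and center the problematic term by conditioning: since $\{n_t=k\}\in\mathcal{F}_{t_k}$, the $\mathcal{F}_\td$-measurable leading part $\|\te_\td\|^{p-2}\te_\td$ pairs to zero with $\EE[W_t-W_\td\mid\mathcal{F}_\td]=0$, leaving only the increment of $z\mapsto\|z\|^{p-2}z$ over one step. Your route avoids the supremum and BDG entirely and gives uniformity in $t$ directly from the Gr\"onwall comparison; it is arguably more elementary, at the cost of having to justify differentiating under the expectation and to control the $C^1$ increment of $\|z\|^{p-2}z$.

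There is one quantitative gap as written. You bound $\|E_t\|\lesssim(\|\te_t\|^{p-2}+\|\te_\td\|^{p-2})\|\te_t-\te_\td\|$ and claim $\te_t-\te_\td=\int_\td^t(f(\bX_s)-f(X_s))\,\D s$ ``carries an extra factor $O(\delta)$.'' After the Young split $a^{p-2}b\leq\epsilon a^p+C b^{p/2}$ with $b=\|\te_t-\te_\td\|\,\|\nabla f\|\,\|W_t-W_\td\|$, you need $\EE[b^{p/2}]=O(\delta^p)$, i.e.\ $b=O(\delta^2)$ in the relevant $L^{p/2}$ sense; with only $\|\te_t-\te_\td\|=O(\delta)$ and $\|W_t-W_\td\|=O(\delta^{1/2})$ you get $b=O(\delta^{3/2})$ and hence only $O(\delta^{3p/4})$ for this term — half an order short of the target. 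The fix is the same bootstrap the paper performs for its $I_4$: invoke Theorem \ref{thm:convergence_order} to get $\EE[\|\te_s\|^p]\leq c_p\delta^{p/2}$, which together with $\|f(\bX_s)-f(X_s)\|\lesssim L(\bX_s,X_s)(\|\te_s\|+\|\hX_s-\bX_s\|)$ upgrades the increment to $\EE[\|\te_t-\te_\td\|^p]=O(\delta^{3p/2})$, whence $b=O(\delta^2)$ and the term closes at $O(\delta^p)$. With that one insertion your argument goes through.
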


\begin{proof}
The proof is deferred to Section \ref{sec:proofs}.
\end{proof}

\section{Adaptive Multilevel Monte Carlo for invariant distributions}
We are interested in the problem of approximating:
\[\pi(\varphi):=\EE_{\pi}\varphi=\int_{\mathbb{R}^m}\varphi(x)\pi(\D x),\] where $\pi$ is the invariant measure of the SDE (\ref{SDE}). Numerically, we can approximate this quantity by simulating $\EE\left[\varphi(X_T)\right]$ for a sufficiently large $T.$ In the following subsections, we will introduce our adaptive multilevel Monte Carlo algorithm and its numerical analysis.
\subsection{Algorithm}
To estimate $\EE\left[\varphi(X_T)\right],$ the simplest Monte Carlo estimator is 
\[\frac{1}{N}\sum_{n=1}^N \varphi(\hX_T^{(n)}),\]
where $\hX_T^{(n)}$ is the terminal value of the $n$th numerical path in the time interval $[0,T]$ using a suitable adaptive function $h^\delta.$ It can be extended to Multilevel Monte Carlo by using non-nested timesteps as explained in \cite{GLW16}. Consider the identity
\begin{equation}
\EE\left[\varphi_L\right]=\EE\left[\varphi_0\right]+\sum_{\ell=1}^L\EE\left[\varphi_\ell-\varphi_{\ell-1}\right],
\label{MLMC identity}
\end{equation}
where $\varphi_\ell:=\varphi(\hX_T^\ell)$ with $\hX_T^\ell$ being the numerical estimator of $X_T,$ which uses adaptive function $h^{\delta}$ with $\delta\!=\!M^{-\ell}$ for some fixed $M\!>\!1.$ Then the standard MLMC estimator is the following telescoping sum:
\[\frac{1}{N_0}\sum_{n=1}^{N_0} \varphi(\hX_T^{(n,0)})+ \sum_{\ell=1}^L\left\{ \frac{1}{N_\ell}\sum_{n=1}^{N_\ell} \left( \varphi(\hX_T^{(n,\ell)}) -\varphi(\hX_T^{(n,\ell-1)}) \right)\right\},\]
where $\hX_T^{(n,\ell)}$ is the terminal value of the $n$th numerical path in the time interval $[0,T]$ using a suitable adaptive function $h^\delta$ with $\delta=M^{-\ell}.$

Different from the standard MLMC with fixed time interval $[0,T],$ we now allow different levels to have a different length of time interval $T_\ell,$ satisfying
$0<T_0<T_1<...<T_\ell<...<T_L=T,$ which means that as level $\ell$ increases, we obtain a better approximation not only by using smaller timesteps but also by simulating a longer time interval. However, the difficulty is how to construct a good coupling on each level $\ell$ since the fine path and coarse path have different lengths of time interval $T_\ell$ and $T_{\ell-1}.$ 

Following the idea of Glynn and Rhee \cite{glynn2014exact} to estimate the invariant measure of some Markov chains, we perform the coupling by starting a level $\ell$ fine path simulation at time $t^f_0=-T_\ell$ and a coarse path simulation at time $t^c_0=-T_{\ell-1}$ and terminate both paths at $t=0.$ Since the drift $f$ and volatility $g$ do not depend explicitly on time $t,$ the distribution of the numerical solution simulated on the time interval $[-T_\ell,0]$ is same as one simulated on $[0,T_\ell].$ The key point here is that the fine path and coarse path share the same driving Brownian motion during the overlap time interval $[-T_{\ell-1},0].$ Owing to the result of Lemma \ref{lemma:SDE_contractivity}, two solutions to the SDE satisfying Assumption \ref{assp:ContractionLipschitz}, starting from different initial points and driven by the same Brownian motion will converge exponentially. Therefore, the fact that different levels terminate at the same time is crucial to the variance reduction of the multilevel scheme. 

Our new multilevel scheme still has the identity (\ref{MLMC identity}) but with $\varphi_\ell = \varphi(\hX_0^\ell)$ with $\hX^\ell_0$ being the terminal value of the numerical path approximation on the time interval $[-T_\ell,0]$ using adaptive function $h^\delta$ with $\delta\!=\!M^{-\ell}.$ The corresponding new MLMC estimator is
\begin{equation}
\widehat{Y} \coloneqq \frac{1}{N_0}\sum_{n=1}^{N_0} \varphi(\hX_0^{(n,0)})+ \sum_{\ell=1}^L\left\{ \frac{1}{N_\ell}\sum_{n=1}^{N_\ell} \left( \varphi(\hX_0^{(n,\ell)}) -\varphi(\hX_0^{(n,\ell-1)}) \right)\right\},
\label{MLMCT est}
\end{equation}
where $\hX_0^{(n,\ell)}$ is the terminal value of the $n$th numerical path through time interval $[-T_\ell,0]$ using adaptive function $h^\delta$ with $\delta=M^{-\ell}.$
Figure \ref{figAlgo} and Algorithm \ref{AlgoEuler} illustrate the detailed implementation of a single adaptive MLMC sample using a non-nested adaptive timestep on level $\ell$ with $M=2.$

\begin{figure}[p]
\begin{center}
\includegraphics[width=\textwidth]{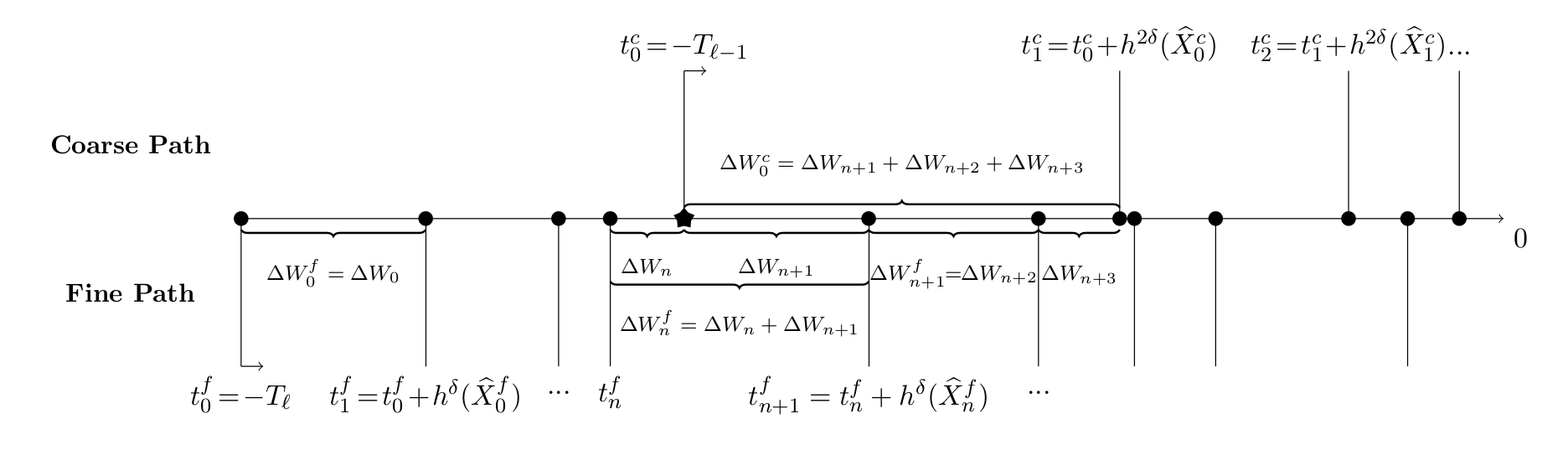}
\end{center}
\caption{Algorithm of adaptive MLMC for infinite interval}
\label{figAlgo}
\end{figure}

\begin{algorithm}{p}
\SetAlgoLined
   $t\coloneqq-T_\ell;\ t^c\coloneqq-T_{\ell-1};\ t^f\coloneqq-T_{\ell}$\;
   $h^c\coloneqq0;\ h^f\coloneqq0$\;
   $\Delta W^c\coloneqq 0;\ \Delta W^f\coloneqq 0$\;
   $\hX^c=x_0;\ \hX^f=x_0$\;
 \While{$t<0$}{
 
 $t_{old}\coloneqq t$\;
 $t\coloneqq \min(t^c,t^f)$\;
 $\Delta W\coloneqq N(0,t-t_{old})$\;
 $\Delta W^c\coloneqq \Delta W^c+\Delta W$\;
 \If{$t=-T_{\ell-1}$}{
 $\Delta W^c \coloneqq 0 $\;     
 }
 $\Delta W^f\coloneqq \Delta W^f+\Delta W$\;
 \If{$t=t^c$}{
  update coarse path $\hX^c$ using $h^c$ and $\Delta W^c$\;
  compute new adapted coarse path timestep $h^c=h^{2\delta}(\hX^c)$\;
  $h^c\coloneqq\min(h^c,-t^c)$\;
  $t^c\coloneqq t^c+h^c$\;
  $\Delta W^c\coloneqq 0$\;
 }
 \If{$t=t^f$}{
  update fine path $\hX^f$ using $h^f$ and $\Delta W^f$\;
  compute new adapted fine path timestep $h^f=h^{\delta}(\hX^f)$\;
  $h^f\coloneqq\min(h^f,-t^f)$\;
  $t^f\coloneqq t^f+h^f$\;
  $\Delta W^f\coloneqq 0$\;
 }

 }
 \KwResult{$\hX^f-\hX^c$}
 \caption{Outline of the algorithm for a single adaptive MLMC sample for scalar SDE on level $\ell$ in time interval $[-T_\ell,0].$}
\label{AlgoEuler}
\end{algorithm}

\subsection{Numerical analysis}

First, we state the exponential convergence to the invariant measure of the original SDEs, which can help us to measure the approximation error caused by truncating the infinite time interval.

\begin{lemma}[Exponential convergence]
If the SDE satisfies Assumption \ref{assp:linear_growth} and $\varphi$ satisfies the Lipschitz condition: there exists a constant $\kappa>0$ such that
\begin{equation}
\|\varphi(x)-\varphi(y)\|\leq \kappa\|x-y\|,
\label{eq: phi Lipschitz}
\end{equation}
 then this SDE is ergodic and has a unique invariant measure $\pi$ and there exist constants $\mu^*,\,\lambda^*>0$ such that
\begin{equation}
\left|\EE\left[\varphi(X_t)-\pi(\varphi)\right]\right|\leq \mu^*(1+\|x_0\|^2)\,\e^{-\lambda^*t}.
\label{eq: weak ergodic SDE convergence}
\end{equation} 
If the SDE additionally satisfies Assumption \ref{assp:ContractionLipschitz}, then there exists a constant $\mu>0$ depending on $x_0,$ $\kappa$ and $C_1$ in Lemma \ref{lemma:SDE_stability} such that
\begin{equation}
\left|\EE\left[\varphi(X_t)-\pi(\varphi)\right]\right|\leq \mu\,\e^{-\lambda t}.
\label{eq: ergodic SDE convergence}
\end{equation}
\label{lemma: Ergodicity SDE}
\end{lemma}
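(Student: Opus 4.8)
The plan is to treat the two assertions separately, since they rest on different mechanisms: the first is a classical geometric-ergodicity statement relying only on the dissipative drift and the non-degeneracy of the noise, whereas the second exploits the genuine pathwise contraction already recorded in Lemma~\ref{lemma:SDE_contractivity}.

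For the first part (under Assumption~\ref{assp:linear_growth} alone) I would invoke the Foster--Lyapunov/Harris framework. Take $V(x)=1+\|x\|^2$; applying It\^o's formula together with the dissipativity (\ref{eq:onesided_growth}) and the bound (\ref{eq:g_bound}) gives the generator estimate $\mathcal{L}V(x)=2\langle x,f(x)\rangle+\|g(x)\|^2\leq -2\alpha\|x\|^2+3\beta\leq -c\,V(x)+d$ for suitable $c,d>0$, i.e.\ a geometric drift condition. The non-degeneracy of $g$ asserted in Assumption~\ref{assp:linear_growth}, combined with the local Lipschitz regularity of the coefficients, yields a minorisation (small-set) condition on the sublevel sets of $V$, hence irreducibility and aperiodicity. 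Feeding these two ingredients into a standard geometric-ergodicity theorem (Meyn--Tweedie, or the version used in \cite{MSH02}) produces a unique invariant measure $\pi$ and a $V$-weighted convergence bound $|\EE_{x}[\psi(X_t)]-\pi(\psi)|\leq C\,\|\psi\|_V\,V(x)\,\e^{-\lambda^* t}$, where $\|\psi\|_V=\sup_x|\psi(x)|/V(x)$. A Lipschitz $\varphi$ has at most linear growth, so $\|\varphi\|_V<\infty$; evaluating at $x=x_0$ and using $V(x_0)=1+\|x_0\|^2$ gives exactly (\ref{eq: weak ergodic SDE convergence}) with $\mu^*=C\|\varphi\|_V$.

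For the second part I would use a synchronous coupling. Let $Y_t$ solve the same SDE driven by the same Brownian motion as $X_t$, but with $Y_0$ distributed according to the invariant measure $\pi$ obtained above; by invariance $Y_t\sim\pi$ for every $t$, so $\EE[\varphi(Y_t)]=\pi(\varphi)$. The Lipschitz property (\ref{eq: phi Lipschitz}) then gives $|\EE[\varphi(X_t)]-\pi(\varphi)|=|\EE[\varphi(X_t)-\varphi(Y_t)]|\leq \kappa\,\EE[\|X_t-Y_t\|]$. I would then apply the contraction estimate of Lemma~\ref{lemma:SDE_contractivity}: although it is stated for $p\in[2,p^*]$, the same It\^o argument delivers the $p=1$ bound $\EE[\|X_t-Y_t\|]\leq \e^{-\lambda t}\,\EE[\|X_0-Y_0\|]$ (the sign condition (\ref{eq:contraction_Lipschitz}) with $p^*\geq 2$ leaves enough negativity to dominate the first-order noise term), or alternatively use $p=2$ and Cauchy--Schwarz at the cost of replacing $C_1$ by $\sqrt{C_2}$. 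Since $X_0=x_0$ is deterministic, $\EE[\|X_0-Y_0\|]\leq \|x_0\|+\pi(\|\cdot\|)$, and the first moment of $\pi$ is finite with $\pi(\|\cdot\|)\leq C_1$: the uniform moment bound of Lemma~\ref{lemma:SDE_stability} provides uniform integrability, so $\EE_{x_0}[\|X_t\|]\to\pi(\|\cdot\|)\leq C_1$. This yields (\ref{eq: ergodic SDE convergence}) with $\mu=\kappa(\|x_0\|+C_1)$ and rate equal to the contraction constant $\lambda$.

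The main obstacle lies in the first part, namely establishing the minorisation/irreducibility condition rigorously. The drift condition is an immediate consequence of dissipativity, but the small-set estimate requires using the non-degeneracy of $g$ to control the transition kernel on compact sets, via ellipticity and a support/heat-kernel argument; this is the technical heart of the classical theory and is cleanest to discharge by quoting an established geometric-ergodicity theorem rather than reproving it. By contrast the second part is essentially immediate once Lemma~\ref{lemma:SDE_contractivity} is available; the only points needing care are the $p=1$ endpoint of the contraction and the finiteness of the first moment of $\pi$, both of which follow from results already proved.
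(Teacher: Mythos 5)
Your proposal is correct and follows essentially the same route as the paper: the first assertion is discharged by appealing to the standard Foster--Lyapunov/geometric-ergodicity theory (the paper simply cites Theorem 4.4 of \cite{MSH02} and Theorem 6.1 of \cite{MT93b}, whose content is exactly the drift-plus-minorisation argument you sketch), and the second is the same synchronous coupling with $Y_0\sim\pi$ combined with Lemma \ref{lemma:SDE_contractivity} and the moment bound of Lemma \ref{lemma:SDE_stability}. If anything you are slightly more careful than the paper, which applies the contraction estimate at $p=1$ without comment even though Lemma \ref{lemma:SDE_contractivity} is stated only for $p\in[2,p^*]$; your suggested fix via $p=2$ and Cauchy--Schwarz (or a direct $p=1$ argument) closes that small gap.
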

\vspace{-1.2cm}
\begin{proof}
The proof of existence and uniqueness of the invariant measure and convergence (\ref{eq: weak ergodic SDE convergence}) is given in Theorem 4.4 in \cite{MSH02} and Theorem 6.1 in \cite{MT93b}.

Next, we can define a new random variable $Y_0$ which follows the invariant measure $\pi,$ then the solution $Y_t$ to the SDE with the initial value $Y_0$ will also follows the invariant measure for any $t\!>\!0.$ Therefore, by Lipschitz property of $\varphi$ and Lemma \ref{lemma:SDE_stability} and \ref{lemma:SDE_contractivity}, there exists a constant $\mu\!>\!0$ such that
\begin{eqnarray*}
\left|\EE\left[\varphi(X_t)-\pi(\varphi)\right]\right| & = & \left|\EE\left[\varphi(X_t)-\varphi(Y_t)\right]\right|\ \leq\  \kappa\, \EE\left[\|X_t-Y_t\|\right]\nonumber\\
&\leq & \kappa\, \EE\left[\|X_0-Y_0\|\right]\e^{-\lambda t}\ \leq\  \kappa\,(\|x_0\|+C_1)\, \e^{-\lambda t}\coloneqq \mu \e^{-\lambda t}.
\end{eqnarray*}
\end{proof}
Note that $\lambda$ is easier to estimate than $\lambda^*$ through Assumption \ref{assp:Lipschitz_diff}.

\begin{lemma}[Variance of MLMC corrections for bounded volatility] If $\varphi$ satisfies the Lipschitz condition \eqref{eq: phi Lipschitz}, the SDE satisfies Assumption \ref{assp:ContractionLipschitz} and the timestep function $h^\delta$ satisfies Assumption 
\ref{assp:timestep_delta} with $\delta=M^{-\ell}$ for each level, then for each level $\ell,$ there exist constants $c_1$ and $c_2$ such that the variance of correction $V_{\ell}:=\mathbb{V}\left[\varphi(\hX_0^{\ell}) -\varphi(\hX_0^{\ell-1})\right]$ satisfies
\begin{equation}
V_\ell\leq c_1\, M^{-\ell}+c_2\, \e^{-2\lambda T_{\ell-1}}.
\end{equation} 
\label{lemma: strong error}
\end{lemma}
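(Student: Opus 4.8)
The plan is to bound the variance by a mean-square distance between the two numerical terminal values, and then split that distance using two auxiliary \emph{exact} solutions of the SDE. First I would use $V_\ell \leq \EE\left[\|\varphi(\hX_0^\ell) - \varphi(\hX_0^{\ell-1})\|^2\right]$ and the Lipschitz property (\ref{eq: phi Lipschitz}) to reduce to
\[
V_\ell \leq \kappa^2\, \EE\left[\|\hX_0^\ell - \hX_0^{\ell-1}\|^2\right],
\]
so that everything rests on estimating this mean-square error between the fine and coarse terminal states.

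The key device is to introduce two exact solutions driven by the same Brownian paths that drive the numerical solutions. Let $X_t$ be the exact solution on $[-T_\ell,0]$ started from $x_0$ at time $-T_\ell$ and driven by the fine-path Brownian motion, and let $Y_t$ be the exact solution on $[-T_{\ell-1},0]$ started from $x_0$ at time $-T_{\ell-1}$ and driven by the coarse-path Brownian motion. Since the fine and coarse paths share the same Brownian increments on the overlap $[-T_{\ell-1},0]$, the processes $X$ and $Y$ are driven by the same Brownian motion there and differ only through their values at $-T_{\ell-1}$. Writing
\[
\hX_0^\ell - \hX_0^{\ell-1} = (\hX_0^\ell - X_0) + (X_0 - Y_0) + (Y_0 - \hX_0^{\ell-1}),
\]
and applying Minkowski's inequality reduces the problem to three separate $L^2$ estimates.

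For the two outer terms I would invoke Theorem \ref{thm:convergence_order} with $p=2$ (valid since $p^*\!\geq\!2$), whose crucial feature is that the bound is \emph{uniform} in the length of the time interval: the fine numerical solution satisfies $\EE[\|\hX_0^\ell - X_0\|^2]^{1/2} \leq C\, M^{-\ell/2}$, and the coarse solution, which corresponds to $\delta=M^{-(\ell-1)}$, gives $\EE[\|Y_0 - \hX_0^{\ell-1}\|^2]^{1/2} \leq C\, M^{-(\ell-1)/2} = C\,M^{1/2}\,M^{-\ell/2}$. For the middle term, $X$ and $Y$ are two exact solutions on the overlap driven by the same Brownian motion, so Lemma \ref{lemma:SDE_contractivity} yields $\EE[\|X_0 - Y_0\|^2] \leq \e^{-2\lambda T_{\ell-1}}\, \EE[\|X_{-T_{\ell-1}} - x_0\|^2]$, where the starting difference is bounded by a constant independent of $\ell$ through the moment bound of Lemma \ref{lemma:SDE_stability}. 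Combining the three estimates via Minkowski and then $(a\!+\!b\!+\!c)^2 \leq 3(a^2\!+\!b^2\!+\!c^2)$ absorbs the $M^{1/2}$ factor into a constant and produces the claimed form $c_1 M^{-\ell} + c_2\, \e^{-2\lambda T_{\ell-1}}$.

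The main obstacle I anticipate is making the contractivity step rigorous, since Lemma \ref{lemma:SDE_contractivity} is stated for deterministic initial data whereas here $X_{-T_{\ell-1}}$ is a random, $\mathcal{F}_{-T_{\ell-1}}$-measurable starting point. This is handled by conditioning on $\mathcal{F}_{-T_{\ell-1}}$ and applying the contractivity estimate pathwise over the overlap, using that the future Brownian increments are independent of the past and that the SDE coefficients carry no explicit time dependence. One must also verify that the two auxiliary solutions are coupled through the same Brownian motion precisely on $[-T_{\ell-1},0]$, which is exactly the construction underlying the MLMC coupling.
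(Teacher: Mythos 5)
Your proposal is correct and follows essentially the same route as the paper: the same Lipschitz reduction, the same two auxiliary exact solutions coupled to the fine and coarse Brownian paths, the same three-term splitting handled by Theorem \ref{thm:convergence_order} for the outer terms and Lemmas \ref{lemma:SDE_stability} and \ref{lemma:SDE_contractivity} for the middle term. Your added remark about conditioning on $\mathcal{F}_{-T_{\ell-1}}$ to justify applying the contractivity lemma with a random starting point is a point the paper passes over silently, and is a worthwhile clarification.
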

\vspace{-1.3cm}
\begin{proof}
Lipschitz condition \eqref{eq: phi Lipschitz} implies 
\[V_\ell \leq \EE\left[\left|\varphi(\hX_0^{\ell}) -\varphi(\hX_0^{\ell-1})\right|^2\right]\leq 
\kappa^2\, \EE\left[\left\|\hX_0^{\ell} - \hX_0^{\ell-1}\right\|^2\right].
\]
$\hX^{\ell}$ and $\hX^{\ell-1}$ share the same driving Brownian motion from $-T_{\ell-1}$ to $0.$ We can define the corresponding solution to the SDE (\ref{SDE}) starting from $x_0$ and driven by the same Brownian motion as $\hX^{\ell-1}$ through time interval $[-T_{\ell-1},0]$ by $X^c,$ and the solution starting from $x_0$ driven by the same Brownian motion as $\hX^{\ell}$ through time interval $[-T_{\ell},0]$ by $X^f.$

Then, by Jensen's inequality, we obtain that
\[\EE\left[\left\|\hX_{0}^{\ell} - \hX_{0}^{\ell-1}\right\|^2\right]\leq 3\,
(E_1+E_2+E_3),\]
where
\begin{eqnarray*}
E_1 &=& \EE\left[\left\|X^c_{0} - \hX_{0}^{\ell-1}\right\|^2\right],\\
E_2 &=& \EE\left[\left\|\hX_{0}^{\ell} - X^f_{0}\right\|^2\right],\\
E_3 &=& \EE\left[\left\|X^f_{0}- X^c_{0}\right\|^2\right].
\end{eqnarray*}
Theorem \ref{thm:convergence_order} implies that there exist a constant $C_2$ which does not depend on $T_\ell$ such that
\[E_1 \leq C_2 M^{-(\ell-1)},\ E_2 \leq C_2 M^{-\ell},\]
and Lemma \ref{lemma:SDE_stability} and Lemma \ref{lemma:SDE_contractivity}  imply that there exists a constant $C$ depending on $x_0$ and $C_2$ in Lemma \ref{lemma:SDE_stability} such that
\begin{eqnarray*}
E_3 &\leq& \EE\left[\|X^f_{-T_{\ell-1}}-x_0\|^2\right]\e^{-2\lambda T_{\ell-1}}\\
&\leq& 2\left(\EE\left[\|x_0\|^2\right]+\EE\left[\|X^f_{-T_{\ell-1}}\|^2\right]\right)\e^{-2\lambda T_{\ell-1}}\ \leq\ C\, \e^{-2\lambda T_{\ell-1}}.
\end{eqnarray*}
Finally, we obtain the desired result:
\[
V_\ell \leq 3\kappa^2\left(C_2 M^{-\ell+1}+ C_2 M^{-\ell}+C\e^{-2\lambda T_{\ell-1}}\right)\coloneqq c_1\, M^{-\ell}+c_2\, \e^{-2\lambda T_{\ell-1}}.
\]
\end{proof}

Note that if we set
\begin{equation}
T_\ell = (\ell\!+\!1)\log\!M / 2\lambda,
\label{eq:Tell}
\end{equation}
then
\begin{equation}
V_\ell \leq (c_1+c_2) M^{-\ell},
\label{eq:V_bound}
\end{equation}
which has the same order of magnitude as the variance bound 
for the standard finite time interval MLMC considered in Part I,
\cite{Part1}.

We define the computational cost of a path simulation to be 
equal to the number of timesteps. Hence, due to Lemma 
\ref{lemma: BoundedTimestep} and (\ref{eq:Tell}), there exists a 
constant $C_0$ such that the expected cost of a single MLMC sample
on level $\ell$ is bounded by $C_0 (\ell\!+\!1) M^\ell$.  Given this, 
we obtain the following theorem for the complexity of the MLMC 
algorithm to achieve a specified Mean Square Error accuracy.

\begin{theorem}[MLMC for invariant measure]If $\varphi$ satisfies the Lipschitz condition \eqref{eq: phi Lipschitz}, the SDE satisfies Assumption \ref{assp:ContractionLipschitz} and the timestep function $h^\delta$ satisfies Assumption 
\ref{assp:timestep_delta} with $\delta\!=\!M^{-\ell}$ for each level, then by choosing suitable values for $L$ and $T_\ell,\,N_\ell$ for each level $\ell,$ there exists a constant $c_3$ such that the MLMC estimator (\ref{MLMCT est}) has a mean square error (MSE) with bound
\[\EE\left[(\widehat{Y}-\pi(\varphi))^2\right]\leq \varepsilon^2,\]   
and an expected computational cost $C$ with bound
\[
C \leq c_3\, \varepsilon^{-2}|\log \varepsilon|^3.
\]
\label{thm: MLMC in T}
\end{theorem}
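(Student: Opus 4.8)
The plan is to follow the standard multilevel Monte Carlo complexity framework of Giles \cite{giles08,giles15}, adapted to the fact that here the expected per-sample cost on level $\ell$ grows like $(\ell\!+\!1)M^\ell$ rather than $M^\ell$. By the telescoping identity (\ref{MLMC identity}), the estimator $\hY$ in (\ref{MLMCT est}) is unbiased for $\EE[\varphi_L]=\EE[\varphi(\hX_0^L)]$, so the mean square error splits cleanly as
\[
\EE\!\left[(\hY-\pi(\varphi))^2\right] = \mathbb{V}[\hY] + \left(\EE[\varphi(\hX_0^L)]-\pi(\varphi)\right)^2,
\]
and I would aim to control the squared bias and the variance each by $\varepsilon^2/2$.

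First I would bound the bias. Writing $X^f$ for the exact solution on $[-T_L,0]$ started from $x_0$ and driven by the same Brownian motion as $\hX^L$, the triangle inequality and the Lipschitz property (\ref{eq: phi Lipschitz}) give
\[
\left|\EE[\varphi(\hX_0^L)]-\pi(\varphi)\right| \leq \kappa\,\EE\!\left[\|\hX_0^L-X^f_0\|\right] + \left|\EE[\varphi(X^f_0)]-\pi(\varphi)\right|.
\]
Theorem \ref{thm:convergence_order} with $p=1$ and $\delta=M^{-L}$ bounds the first term by $\kappa\,C\,M^{-L/2}$ for a constant $C$, while Lemma \ref{lemma: Ergodicity SDE} bounds the second by $\mu\,\e^{-\lambda T_L}$. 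With the choice of $T_\ell$ in (\ref{eq:Tell}), the truncation term equals $\mu\,M^{-(L+1)/2}$, so the total bias is $O(M^{-L/2})$; taking $L=\lceil 2\log_M(1/\varepsilon)\rceil + O(1)$ therefore makes the squared bias at most $\varepsilon^2/2$ while keeping $L=O(|\log\varepsilon|)$.

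Next I would treat the variance and cost together. With $T_\ell$ as in (\ref{eq:Tell}), Lemma \ref{lemma: strong error} and (\ref{eq:V_bound}) give $V_\ell\leq(c_1+c_2)M^{-\ell}$, and the per-sample cost satisfies $C_\ell\leq C_0(\ell\!+\!1)M^\ell$. Minimising $\sum_\ell N_\ell C_\ell$ subject to $\sum_\ell V_\ell/N_\ell\leq\varepsilon^2/2$ by Lagrange multipliers yields $N_\ell\propto\sqrt{V_\ell/C_\ell}$ and an optimal total cost of order $\varepsilon^{-2}\big(\sum_{\ell=0}^L\sqrt{V_\ell C_\ell}\big)^2$. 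The crucial computation is then
\[
\sum_{\ell=0}^L \sqrt{V_\ell C_\ell} \;\leq\; \sqrt{(c_1+c_2)\,C_0}\,\sum_{\ell=0}^L\sqrt{\ell+1} \;=\; O\!\left(L^{3/2}\right),
\]
so that the cost is $O(\varepsilon^{-2}L^3)=O(\varepsilon^{-2}|\log\varepsilon|^3)$, which is the claimed bound for a suitable $c_3$.

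The \emph{main obstacle} is that the extra $(\ell\!+\!1)$ factor in the cost means this is not the standard $\beta=\gamma$ borderline case of the Giles complexity theorem, so I cannot simply invoke that result verbatim: I must carry out the constrained optimisation directly and track how $\sum_\ell\sqrt{\ell+1}=O(L^{3/2})$ combines with $L=O(|\log\varepsilon|)$ to produce the cubic logarithmic factor rather than the usual squared one. The remaining care concerns the integer rounding $N_\ell=\lceil\cdot\rceil$, which adds a term $\sum_\ell C_\ell = O(L\,M^L)=O(\varepsilon^{-2}|\log\varepsilon|)$ that is of lower order and absorbed into the leading estimate, together with checking that all the constants $c_1,c_2,C,\mu$ entering the bias and variance bounds are uniform in $T$ — which is precisely what the uniform-in-$T$ conclusions of Theorem \ref{thm:stability}, Theorem \ref{thm:convergence_order} and Lemma \ref{lemma: strong error} supply.
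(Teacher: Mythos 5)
Your proposal is correct and follows essentially the same route as the paper's proof: the same bias decomposition into a discretization error (controlled via Theorem \ref{thm:convergence_order}) and a truncation error (controlled via Lemma \ref{lemma: Ergodicity SDE}), the same choice $T_\ell$ as in (\ref{eq:Tell}) with $L=O(|\log\varepsilon|)$, and the same direct Lagrange-multiplier optimisation of the $N_\ell$ leading to $\bigl(\sum_{\ell}\sqrt{\ell+1}\bigr)^2=O(L^3)$ and hence the $\varepsilon^{-2}|\log\varepsilon|^3$ cost. The only cosmetic differences are that you split the MSE into two parts rather than the paper's three and invoke Theorem \ref{thm:convergence_order} with $p=1$ rather than $p=2$ for the discretization bias, neither of which changes the argument.
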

\vspace{-1.3cm}
\begin{proof}
By Jensen's inequality, the mean square error can be decomposed into three parts:
\begin{eqnarray*}
\EE\left[(\widehat{Y}-\pi(\varphi))^2\right]\!\!&\!\!=\!\!&\!\! \mathbb{V}\left[\widehat{Y}\right]+ \left|\EE\left[\widehat{Y}\right]-\pi(\varphi)\right|^2\\
\!\!&\!\!\leq\!\!&\!\! \mathbb{V}\left[\widehat{Y}\right]+ 2\left| \EE\left[\widehat{Y}\right]\!-\EE\left[\varphi(X_{T_L})\right]\right|^2\!\!+ 2\left|\EE\left[\varphi(X_{T_L})\right]-\pi(\varphi)\right|^2
\end{eqnarray*}
which enables us to achieve the MSE bound by bounding each part 
by $\varepsilon^2/3$.

Lemma \ref{lemma: Ergodicity SDE} implies that
\[
2\left|\EE\left[\varphi(X_{T_L})\right]-\pi(\varphi)\right|^2
\ \leq\  2\,\mu^2\e^{-2\lambda T_L}
\ \leq\ \frac{\varepsilon^2}{3}
\]
provided we ensure that
\[
T_L \geq \frac{|\log \varepsilon|}{\lambda }+\frac{\log(6\mu^2)}{2\lambda }.
\]
If we set $T_\ell$ according to (\ref{eq:Tell}),
this is achieved by requiring
\begin{equation}
L\geq \left\lfloor\frac{2|\log \varepsilon|}{\log M}+\frac{\log(6 \mu^2)}{\log M}\right\rfloor.
\label{eq:ineq1}
\end{equation}
By Theorem \ref{thm:convergence_order} and the Lipschitz property \eqref{eq: phi Lipschitz} of $\varphi$, there exists a constant $C_2$ such that
\begin{eqnarray*}
2\left| \EE\left[\widehat{Y}\right]-\EE\left[\varphi(X_{T_L})\right]\right|^2\!\!&\!=\!&\!2\left| \EE\left[\varphi(\hX_{T_L}^L)-\varphi(X_{T_L})\right]\right|^2\\
\!&\!\leq\!&\! 2\kappa^2\, \EE\left[\|\hX_{T_L}^L-X_{T_L}\|^2\right]
\ \leq\ 2\kappa^2 C_2M^{-L}
\ \leq\ \frac{\varepsilon^2}{3},
\end{eqnarray*}
provided
\begin{equation}
L\geq\left\lfloor\frac{2|\log \varepsilon|}{\log M}+\frac{\log(6\kappa^2C_2)}{\log M}\right\rfloor+1.
\label{eq:ineq2}
\end{equation}
Therefore, combining the requirements (\ref{eq:ineq1}) and (\ref{eq:ineq2}),
we choose to define
\begin{equation}
L=\left\lfloor\frac{2|\log \varepsilon|}{\log M}+\frac{\log\left(6 \max(\mu^2,\kappa^2 C_2)\right)}{\log M}\right\rfloor+1,
\label{eq: L}
\end{equation}
giving $L=O(|\log \varepsilon|)$ as $\varepsilon\rightarrow 0$.

Next, we need to choose the number of samples $N_\ell$ for each level.
We aim to minimize the total expected computational cost, which is bounded by
\[
C \leq C_0 \sum_{\ell=0}^L N_\ell (\ell\!+\!1) M^\ell,
\]
while at the same time ensuring that the total variance satisfies the bound
\[
\mathbb{V}\left[\widehat{Y}\right] 
\ = \ \sum_{\ell=0}^L N^{-1}_\ell V_\ell
\ \leq \ (c_1\!+\!c_2) \sum_{\ell=0}^L N^{-1}_\ell M^{-\ell}
\ \leq \  \frac{\varepsilon^2}{3}.
\]

Using a Lagrange multiplier, it is found that the optimal solution to the 
constrained optimization problem
\[
\min_{N_\ell}\   C_0 \sum_{\ell=0}^L N_\ell (\ell\!+\!1) M^\ell
\quad s.t. \quad 
(c_1\!+\!c_2) \sum_{\ell=0}^L N^{-1}_\ell M^{-\ell} \leq  \frac{\varepsilon^2}{3}.
\]
when the $N_\ell$ are treated as real variables is
\[
N_\ell = 3\, (c_1\!+\!c_2)\, \frac{M^{-\ell}}{\sqrt{\ell\!+\!1}}\ \varepsilon^{-2}\sum_{\ell'=0}^L\sqrt{\ell'\!+\!1}.
\]
Rounding this up to an integer by defining
\[
N_\ell = \left\lfloor 3\, (c_1\!+\!c_2)\, \frac{M^{-\ell}}{\sqrt{\ell\!+\!1}}\ \varepsilon^{-2}\sum_{\ell'=0}^L\sqrt{\ell'\!+\!1} \right\rfloor + 1.
\] 
we ensure that the required variance bound is satisfied.
The resulting cost is then bounded by
\[
C \ \leq\ 3\, C_0\, (c_1\!+\!c_2)\, \varepsilon^{-2}
\left(\sum_{\ell=0}^L \sqrt{\ell\!+\!1}\right)^2
+ C_0 \sum_{\ell=0}^L (\ell\!+\!1) M^\ell.
\]
Since
\[
\sum_{\ell=0}^L \sqrt{\ell\!+\!1}
\ \leq\  \int_0^{L+1} \! \sqrt{x\!+\!1}\ \D x
\ \leq\ \fracs{2}{3} (L\!+\!2)^{3/2}
\ = \ O(|\log \varepsilon|^{3/2}),
\]
and
\[
\sum_{\ell=0}^L (\ell\!+\!1) M^\ell
\ \leq\ (L\!+\!1)^2 M^L
\ = \ O(\varepsilon^{-2}|\log \varepsilon|^2),
\]
we obtain the desired final result that there 
exists a constant $c_3$ such that
\[
C \leq c_3 \, \varepsilon^{-2}|\log \varepsilon|^3.
\]
\end{proof}

For Langevin SDEs, the computational cost can be reduced to 
$O(\varepsilon^{-2}).$

\begin{theorem}[Langevin SDEs] If $\varphi$ satisfies the Lipschitz condition \eqref{eq: phi Lipschitz}, and for the SDE, $m\!=\!d$, $g\equiv I_m$, $f$ satisfies 
Assumption \ref{assp:enhanced_Lipschitz}, 
and the timestep function $h^\delta$ satisfies Assumption 
\ref{assp:timestep_delta} with $\delta=M^{-\ell}$ for each level, then for each level $\ell,$ there exist constants $c_1$ and $c_2$ such that 
\begin{equation}
V_\ell\leq c_1\, M^{-2\ell}+c_2\, \e^{-2\lambda T_{\ell-1}}.
\label{eq: variance Langevin}
\end{equation} 
Furthermore, by choosing suitable $T_\ell$ and $N_\ell$ for each level 
$\ell$ in the MLMC estimator (\ref{MLMCT est}), one can achieve the 
MSE bound $\varepsilon^2$ at an expected computational cost bounded by
\[
C\leq c_3\,\varepsilon^{-2},
\] 
for some constant $c_3\!>\!0$.
\label{thm: strong error 2}
\end{theorem}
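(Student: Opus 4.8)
The plan is to follow the two-stage structure of Lemma \ref{lemma: strong error} and Theorem \ref{thm: MLMC in T}, inserting the improved strong convergence rate of Theorem \ref{thm:convergence_order2} at the appropriate places and rebalancing the choice of $T_\ell$ to exploit it.

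First, for the variance bound (\ref{eq: variance Langevin}), I would argue exactly as in Lemma \ref{lemma: strong error}: using the Lipschitz property of $\varphi$, bound $V_\ell \leq \kappa^2\, \EE[\|\hX_0^\ell - \hX_0^{\ell-1}\|^2]$, introduce the exactly coupled true solutions $X^f$ (started at $-T_\ell$) and $X^c$ (started at $-T_{\ell-1}$), each driven by the same Brownian motion as the corresponding numerical path, and split into the three terms $E_1,E_2,E_3$ via Jensen's inequality. The only change is that the two discretization errors $E_1$ and $E_2$ are now controlled by Theorem \ref{thm:convergence_order2} rather than Theorem \ref{thm:convergence_order}, giving $E_1\leq C_2 M^{-2(\ell-1)}$ and $E_2\leq C_2 M^{-2\ell}$ (the squared rate $\delta^2$ in place of $\delta$). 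The coupling term $E_3$ is handled as before: since $g\equiv I_m$, the vanishing of $g(x)\!-\!g(y)$ means Assumption \ref{assp:enhanced_Lipschitz} implies the contractive Lipschitz condition (\ref{eq:contraction_Lipschitz}) for every $p^*$, so Lemma \ref{lemma:SDE_contractivity} applies and, together with the uniform moment bound of Lemma \ref{lemma:SDE_stability}, yields $E_3\leq C\,\e^{-2\lambda T_{\ell-1}}$. Collecting the three terms gives (\ref{eq: variance Langevin}).

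The crucial new ingredient is the choice of $T_\ell$. To stop the truncation contribution $\e^{-2\lambda T_{\ell-1}}$ from dominating the improved discretization contribution $M^{-2\ell}$, I would set $T_\ell = (\ell\!+\!1)\log M/\lambda$ (twice the value used in (\ref{eq:Tell})), so that $\e^{-2\lambda T_{\ell-1}}=M^{-2\ell}$ and hence $V_\ell\leq (c_1\!+\!c_2)M^{-2\ell}$. I would then repeat the bias decomposition of Theorem \ref{thm: MLMC in T}: the truncation part is bounded by Lemma \ref{lemma: Ergodicity SDE}, and the discretization part is now a \emph{first-order} bias $|\EE[\varphi(\hX^L_{T_L})-\varphi(X_{T_L})]|\leq \kappa C_1 M^{-L}$, obtained from the first-order strong rate of Theorem \ref{thm:convergence_order2} with $p\!=\!1$. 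Both bias requirements are met with $L=O(|\log\varepsilon|)$ and, crucially, $M^L=O(\varepsilon^{-1})$ rather than $O(\varepsilon^{-2})$.

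Finally, with per-sample cost $C_0(\ell\!+\!1)M^\ell$ (from Lemma \ref{lemma: BoundedTimestep} and $T_\ell=O(\ell)$) and variance $V_\ell\leq (c_1\!+\!c_2)M^{-2\ell}$, the Lagrange-multiplier optimization gives $N_\ell\propto M^{-3\ell/2}/\sqrt{\ell\!+\!1}$ and a leading cost proportional to $\varepsilon^{-2}\big(\sum_\ell \sqrt{\ell\!+\!1}\,M^{-\ell/2}\big)^2$. The step I expect to require the most care is confirming that this gives $O(\varepsilon^{-2})$: the sum $\sum_\ell \sqrt{\ell\!+\!1}\,M^{-\ell/2}$ now converges to a constant independent of $L$ (in contrast to the $O(|\log\varepsilon|^{3/2})$ growth in Theorem \ref{thm: MLMC in T}), while the integer-rounding overhead $\sum_\ell(\ell\!+\!1)M^\ell=O((L\!+\!1)^2 M^L)=O(|\log\varepsilon|^2\,\varepsilon^{-1})$ is subdominant precisely because $M^L=O(\varepsilon^{-1})$. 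Both contributions are therefore $O(\varepsilon^{-2})$, which gives the claimed bound $C\leq c_3\varepsilon^{-2}$.
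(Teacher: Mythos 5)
Your proposal is correct and takes essentially the same route as the paper: the variance bound is obtained exactly as in Lemma \ref{lemma: strong error} with Theorem \ref{thm:convergence_order2} supplying the $M^{-2\ell}$ rate for the two discretization terms, and the choice $T_\ell=(\ell\!+\!1)\log M/\lambda$ balances the coupling term against it. The only difference is presentational: where you carry out the bias decomposition and the Lagrange-multiplier optimization explicitly (correctly finding that $\sum_\ell\sqrt{\ell\!+\!1}\,M^{-\ell/2}$ converges and the rounding overhead is subdominant), the paper simply bounds the per-sample cost by $C\,M^{(1+\epsilon)\ell}$ and invokes the standard MLMC complexity theorem with $\gamma<\beta$, of which your computation is a faithful unpacking.
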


\begin{proof}
Following a similar argument to the proof of Lemma \ref{lemma: strong error}, 
Theorem \ref{thm:convergence_order2} implies 
$
V_\ell\leq c_1\, M^{-2\ell}+c_2\, \e^{-2\lambda T_{\ell-1}},
$
and by choosing $T_\ell$ to be
\begin{equation}
T_\ell = (\ell\!+\!1) \log\! M / \lambda,
\label{eq:Tell2}
\end{equation}
we obtain $V_\ell\leq (c_1\!+\!c_2) M^{-2\ell}.$
The computational cost of a single MLMC sample on
level $\ell$ satisfies
\[
C_\ell \ \leq\ C_0 (\ell\!+\!1) M^\ell
      \ \leq\ C\, M^{(1+\epsilon)\ell}
\] 
for any $0\!<\!\epsilon\!\ll\! 1$ and some $C\!>\!0$.
Therefore, the standard MLMC Theorem 1 in \cite{giles15} 
is applicable with $\gamma\!<\!\beta$, giving an 
$O(\varepsilon^{-2})$ complexity.
\end{proof}
Note that the choice of $T_\ell$ \eqref{eq:Tell2} for the Langevin equation is different from \eqref{eq:Tell} for SDEs with bounded volatility. In other words, the strong convergence result and the contractive convergence rate $\lambda$ together determine $T_\ell$.  The difference in the variance convergence rate also
affects the choice of $M$.  Based on the analysis in \cite{giles08}, 
the optimal $M$ for SDEs with general $g$ is in the range $4-8$, while in 
the Langevin case the optimal $M$ is around $2$.

\if 0

\subsection{Improvement by averaging the path}
By exponentially ergodicity (\ref{eq: ergodic SDE convergence}) of SDE, we obtain that for any $\Delta t>0,$
\begin{eqnarray}
\left|\EE\left[\frac{1}{\Delta t}\int_t^{t+\Delta t} \varphi(X_s)\,\D s-\pi(\varphi)\right]\right| \leq \frac{1}{\Delta t}\int_t^{t+\Delta t} \EE\left[\left|\varphi(X_s)-\pi(\varphi)\right|\right] \D s
\nonumber \\
\leq \frac{1}{\Delta t}\int_t^{t+\Delta t} \mu \e^{-\lambda s} \D s
=\mu \e^{-\lambda t} \frac{1-\e^{-\lambda \Delta t}}{\lambda \Delta t}
\leq \mu \e^{-\lambda t},
\end{eqnarray}
and also due to the Lipschitz property \eqref{eq: phi Lipschitz} of $\varphi$ and Theorem \ref{thm:convergence_order}, we have
\begin{eqnarray}
\EE\left[\left|\frac{1}{\Delta t}\int_t^{t+\Delta t} \varphi(\hX_s)\,\D s-\frac{1}{\Delta t}\int_t^{t+\Delta t} \varphi(X_s)\,\D s\right|^2\right]\nonumber \\
 \leq \frac{1}{\Delta t}\int_t^{t+\Delta t}  \EE\left[\left|\varphi(\hX_s)-\varphi(X_s)\right|^2\right] \D s
\leq \kappa^2 C_2\delta.
\label{T for Langevin}
\end{eqnarray}
Therefore, instead of $\hX_t$, we can consider a new estimator which is the time average of the last piece of the numerical path:
\[\widehat{Y}_A\coloneqq\frac{1}{\Delta t}\int_t^{t+\Delta t}\!\!\!\! \varphi(\hX_s)\,\D s\]
and approximate it using a composite trapezium rule:
\begin{equation}
\widehat{Y}_a\coloneqq\frac{1}{2\Delta t} \sum_{k=n_t}^{n_{t^*}} \varphi(\hX_{t_k})\Delta t_k 
\label{average estimator}
\end{equation}
where $t^*=t+\Delta t,$ $\Delta t_{n_t}=t_{n_t+1}-t,$ $\Delta t_{n_{t^*}} = t^*- \underline{t^*},$ $\Delta t_{n_t+1}=\Delta t_{n_t}+h_{n_t+1},$ $\Delta t_{n_{t^*}-1} = \Delta t_{n_{t^*}} + h_{n_{t^*}-1}$
and $\Delta t_k = h_{k-1}+h_k$ for other $k.$

The approximation error of the numerical integral is a smaller order compared with the simulation error.

Note that this estimator can be easily extended to a multilevel estimator by keeping $\Delta t$ constant for all levels $\ell$ to make sure the telescoping summation identity still holds. Numerically, this estimator has the same complexity order as the previous one but with a smaller multiplicative constant, since the time average can reduce the variance, especially when the solution has some periodic behaviour. 

\subsection{Unbiased scheme}
Following another idea of Rhee and Glynn in \cite{rhee2015unbiased}, we can construct an unbiased estimator for expectations with respect to the invariant measure of the stochastic Langevin equations by reconsidering the identity (\ref{MLMC identity}) and taking $L\rightarrow \infty.$ Choosing $T_\ell$ to be (\ref{eq:Tell2}) implies that 
$T_L\rightarrow \infty$ as $L\rightarrow \infty.$
Therefore, by Theorem \ref{thm:convergence_order2} and exponential ergodicity of the SDEs \eqref{eq: ergodic SDE convergence}, we can use the similar approach in previous section to show that
\[\pi(\varphi)=\lim_{L\rightarrow \infty} \EE\left[\varphi_L\right]=\sum_{\ell=0}^\infty \EE\left[\varphi_\ell-\varphi_{\ell-1}\right]\]
where $\varphi_{-1}=0.$ 
Instead of truncating the infinite summation at a fixed $L$ to achieve the required accuracy in MLMC, we can introduce a discrete probability $\rho(\ell)>0$ of taking a sample from level $\ell$  for all $\ell\in\mathbb{N}.$ Then Fubini's theorem implies the following identity:
\[\pi(\varphi)=\sum_{\ell=0}^\infty \EE\left[\varphi_\ell-\varphi_{\ell-1}\right]=\EE\left[\sum_{\ell=0}^\infty \frac{\mathbbm{1}_{\{L=\ell\}}}{\rho(\ell)}(\varphi_\ell-\varphi_{\ell-1})\right],\]    
which gives the single term unbiased estimator:
\[
\widehat{Z}_n\,\coloneqq\, \frac{\varphi_{\ell_n}-\varphi_{\ell_n-1}}{\rho(\ell_n)}
\]
where $\ell_n$ is the $n$th sample of $L$ and $\varphi_{\ell_n}-\varphi_{\ell_n-1}$ is the corresponding correction estimator on level $\ell_n.$ 

Next, we need to find an optimal probability mass function $\rho$ such that the variance of the estimator:
\[\mathbb{V}\left[\widehat{Z}\right]\, \leq\, \EE\left[\widehat{Z}^2\right]\,=\, \sum_{\ell=0}^\infty \frac{\EE\left[\left|\varphi_\ell - \varphi_{\ell-1}\right|^2\right]}{\rho(\ell)}\,\leq\, (c_1+c_2)\sum_{\ell=0}^\infty \frac{M^{-2\ell}}{\rho(\ell)}< \infty,\]
and expected computational cost:
\[\EE\left[C(\widehat{Z})\right]\leq c_1\sum_{\ell=0}^\infty \rho(\ell) M^{\ell} T_\ell <\infty, 
\]
and minimize $\EE\left[C(\widehat{Z})\right]\cdot\mathbb{V}\left[\widehat{Z}\right].$ Proposition 1 in \cite{rhee2015unbiased} gives the optimal probability mass function:
\begin{equation}
\rho^*(\ell)=\frac{1}{\sqrt{T_\ell M^{3\ell}}} \left(\sum_{n=0}^\infty \frac{1}{\sqrt{T_n M^{3n}}} \right)^{-1}.
\label{PMF}
\end{equation}

Note that we only construct the unbiased estimator for stochastic Langevin equations since in this case the variance decays faster than the increase of the computational cost as the level increases so that we can find a probability mass function to make both the variance and expected computational cost finite. Note also that the expected computational cost to achieve $\varepsilon^2$ MSE is still $O(\varepsilon^{-2}).$

\section{Numerical Experiment}
In this section we present numerical results obtained from $3$ different schemes: adaptive MLMC (AMLT), adaptive MLMC with time average (AMLTA) and the unbiased adaptive MLMC (uAMLT). These are for the following scalar SDE:
\[\D X_t=\left(-X_t-X_t^3\right)\D t + \D W_t,\]
which satisfies the dissipative condition (\ref{eq:onesided_growth}) and the contractive condition \eqref{eq:onesided_Lipschitz2}. Our interest is to compute $\pi(\varphi)$ where $\varphi(x)=\|x\|$ satisfying Lipschitz condition. 

Since the probability density function $\pi$ is 
\[\frac{\exp(-x^2-\frac{1}{2}x^4)}{\int_{-\infty}^{\infty}\exp(-x^2-\frac{1}{2}x^4)\,\D x},\] 
we can use numerical integration to calculate an approximate value:
$\varphi(\pi) \approx 0.44115$ 
with accuracy $10^{-5},$ and use this value as a benchmark for our numerical algorithms.
 
Following the condition (\ref{eq:timestep}) we can set $x_0\!=\!0$, $\hmax\!=\!1$, $M\!=\!2$ and choose the adaptive function $h,\ h^\delta$ to be 
\[h(x)=\frac{\max(1, |x|)}{\max(1, |x+x^3|)},\ \ h^{\delta}(x) = 2^{-\ell} h(x). \]
Next we need to determine $T_\ell$ for each level. By differentiating the drift $f$ we know $\lambda\geq 1,$ and we use $\lambda=1$ in the equation (\ref{T for Langevin}) and choose 
\[T_\ell = \log 2\,(\ell+1)\]
 in our numerical scheme to ensure a sufficiently long time interval and control the truncation error. 

\begin{figure}[h]
\begin{center}
\includegraphics[width=0.8\textwidth]{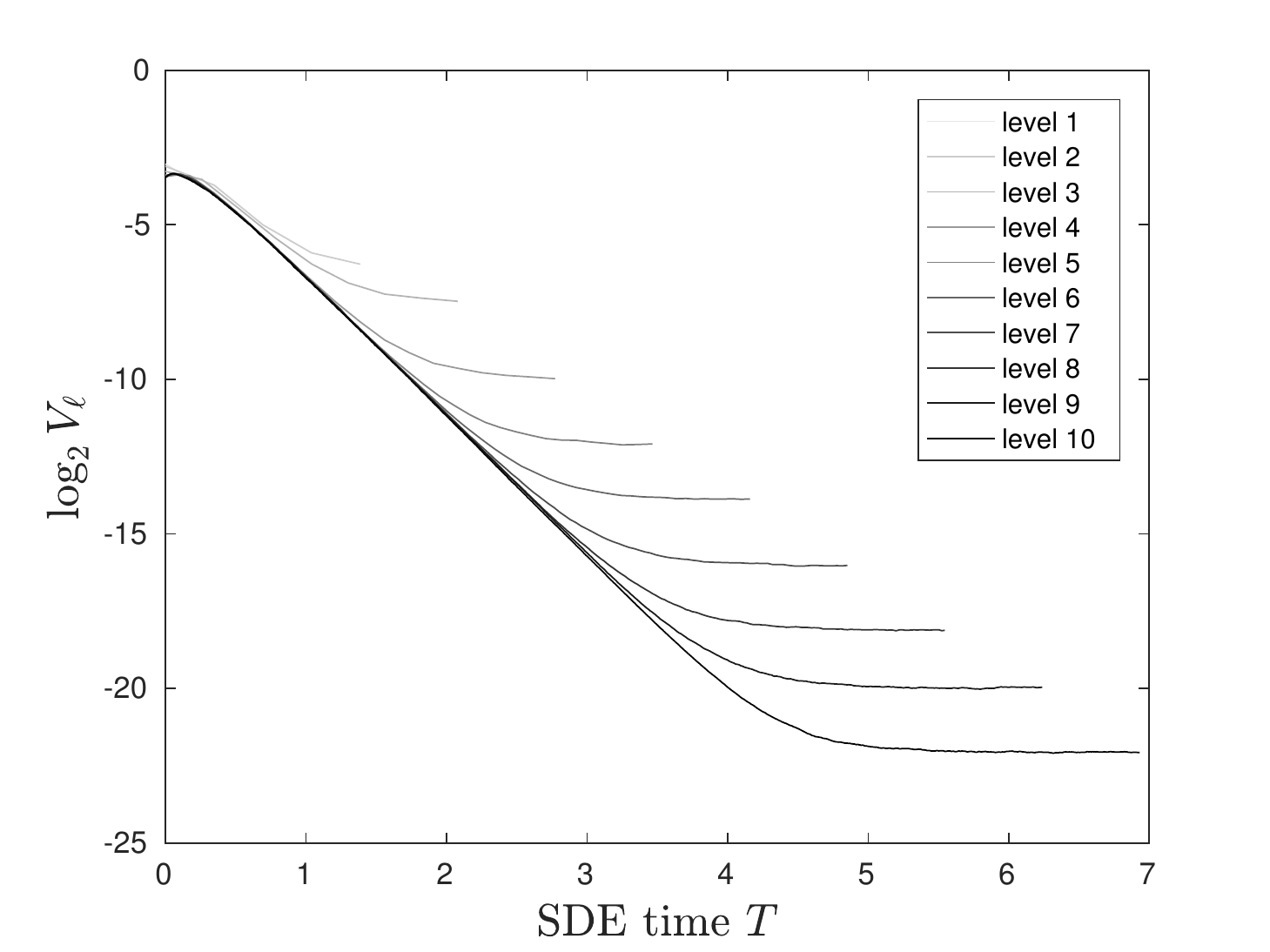}
\end{center}
\caption{Variance of corrections on each level $\ell$}
\label{figVar}
\end{figure}
The variance result (\ref{eq: variance Langevin}) for the Langevin equation is illustrated in Figure \ref{figVar}. The exponential part dominates the variance at the beginning, so the variance decays exponentially. As time increase, the $M^{-2\ell}$ becomes the major part of the variance and the variance stops decreasing.

The numerical results from AMLT are presented in Figure \ref{fig2}. The top right plot shows first order convergence for the weak error and the top left plot shows second order convergence for the multilevel correction variance. Hence the computational
cost for RMS accuracy ε is $O(\varepsilon^{-2})$ which is verified in the bottom right plot, with the bottom left plot showing the number of MLMC samples on each level as a function of the target accuracy.

\begin{figure}[h]
\begin{center}
\includegraphics[width=0.9\textwidth,height=0.9\textwidth]{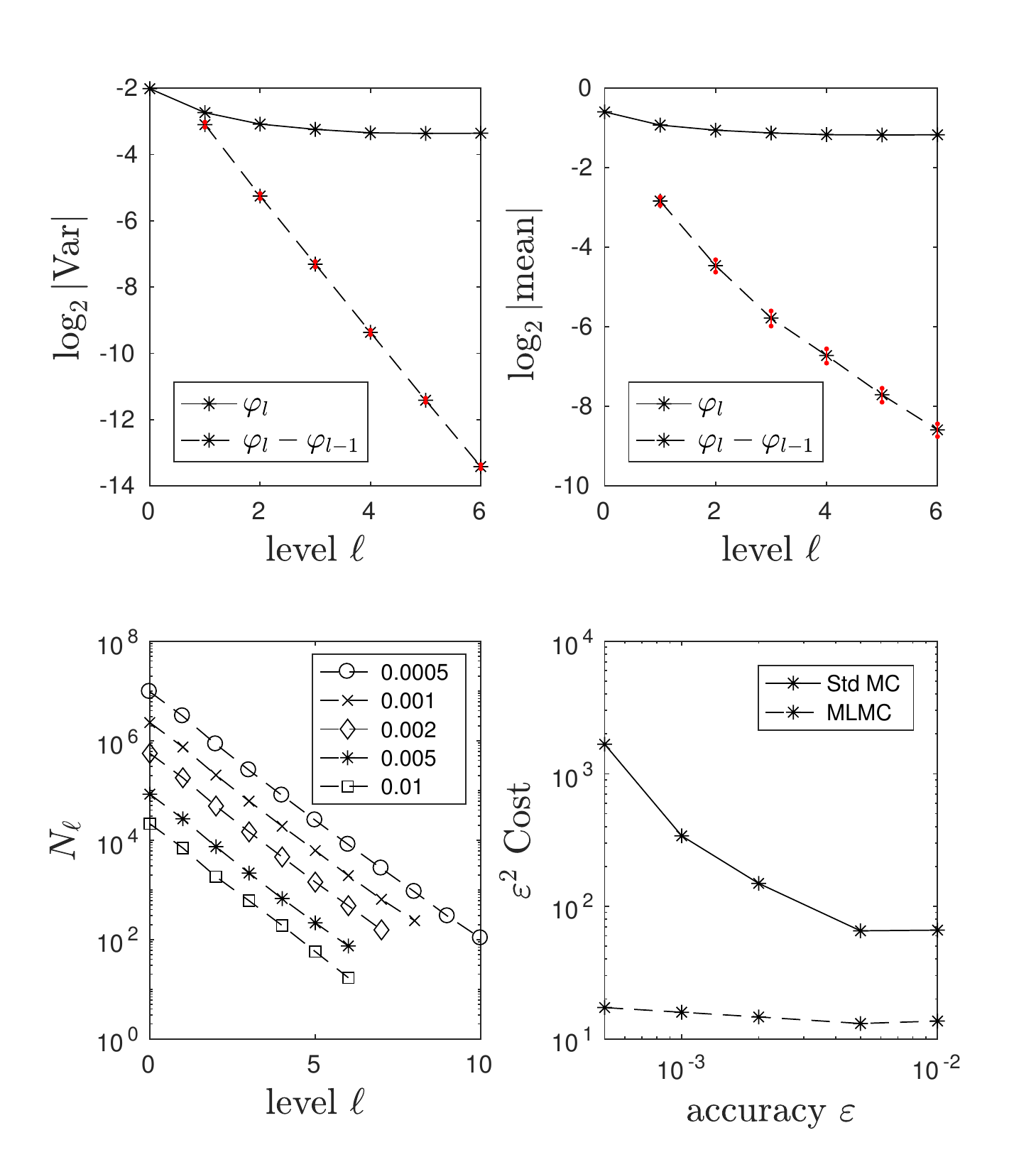}
\end{center}
\caption{Adaptive MLMC for invariant measure (AMLT)}
\label{fig2}
\end{figure}

For AMLTA, we use the same $T_\ell$ as AMLT but simulate to time $\Delta t = 2\log 2 $ after time $0$ and compute the time average on the interval $[0,\Delta t].$ The unbiased scheme uAMLT uses the same $T_\ell$ and adaptive functions on each level except that we choose the levels according to the probability mass function (\ref{PMF}). The numerical experiment result of comparison of different schemes is presented in Figure \ref{CompareFig}.  
\begin{figure}[h]
\begin{center}
\includegraphics[width=\textwidth]{Compare}
\end{center}
\caption{Comparison of different schemes}
\label{CompareFig}
\end{figure}
The top right plot shows that the estimator with time averaging (\ref{average estimator}) has a much smaller variance compared with the standard one. The top left plot presents that, for each level $\ell$, the variances of corrections $V_\ell$ for both standard estimator and the estimator with time averaging decrease in the same rates but the one for the estimator with time averaging is less than half of the other. Both plots imply the improvement on variance by time averaging.

The number of samples $N_\ell$ in each level for different schemes with $\varepsilon=5*10^{-4},2*10^{-3},10^{-2}$ is illustrated in bottom left plot. First, we can see that the AMLTA require the smallest number of samples for each level due to the smaller variance. Second, the AMLT and uAMLT need similar number of samples on first several levels and uAMLT simulates more deeper level samples resulted from the randomization of the levels. 

The bottom right plot shows that the computational cost of all the schemes are $O(\varepsilon^{-2})$ compared with standard Monte Carlo method and AMLTA performs best. AMLT performs better than uAMLT because uAMLT need to simulate additional deeper levels to eliminate the bias.

\fi

\section{Numerical Experiment}

In this section we present numerical results for the following scalar SDE:
\[
\D X_t=\left(-X_t-X_t^3\right)\D t + \D W_t,
\]
which satisfies the dissipativity condition (\ref{eq:onesided_growth}) and the contractive condition \eqref{eq:onesided_Lipschitz2}. Our interest is to compute $\pi(\varphi)$ where $\varphi(x)=\|x\|$ satisfying Lipschitz condition. 

Since the probability density function $\pi$ is 
\[\frac{\exp(-x^2-\frac{1}{2}x^4)}{\int_{-\infty}^{\infty}\exp(-x^2-\frac{1}{2}x^4)\,\D x},\] 
we can use numerical integration to calculate an approximate value:
$\varphi(\pi) \approx 0.44115$ 
with accuracy $10^{-5},$ and use this value as a benchmark for our numerical tests.
 
\begin{figure}[t!]
\begin{center}
\includegraphics[width=0.8\textwidth]{variance}
\end{center}
\caption{Variance of corrections on each level $\ell$}
\label{figVar}
\end{figure}

Following condition (\ref{eq:timestep}) we can set $x_0\!=\!0$, $\hmax\!=\!1$, $M\!=\!2$ and choose the adaptive function $h,\ h^\delta$ to be 
\[h(x)=\frac{\max(1, |x|)}{\max(1, |x+x^3|)},\ \ h^{\delta}(x) = 2^{-\ell} h(x). \]

Next we need to determine $T_\ell$ for each level. Linear perturbations to the SDE
satisfy the ODE:
\[
\D Y_t= - \left(1+3\, X_t^2\right) Y_t\, \D t,
\]
and therefore $\lambda\!\geq\! 1$.  Hence we choose to use
\[
T_\ell = (\ell\!+\!1) \log 2
\]
to ensure that the truncation error is acceptably small.

The variance result (\ref{eq: variance Langevin}) for the Langevin equation is illustrated in Figure \ref{figVar}. The exponential term dominates the variance initially, but as $T_\ell$ increase, 
the $M^{-2\ell}$ term eventually becomes the major part of the variance.

\begin{figure}[t!]
\begin{center}
\includegraphics[width=\textwidth,height=\textwidth]{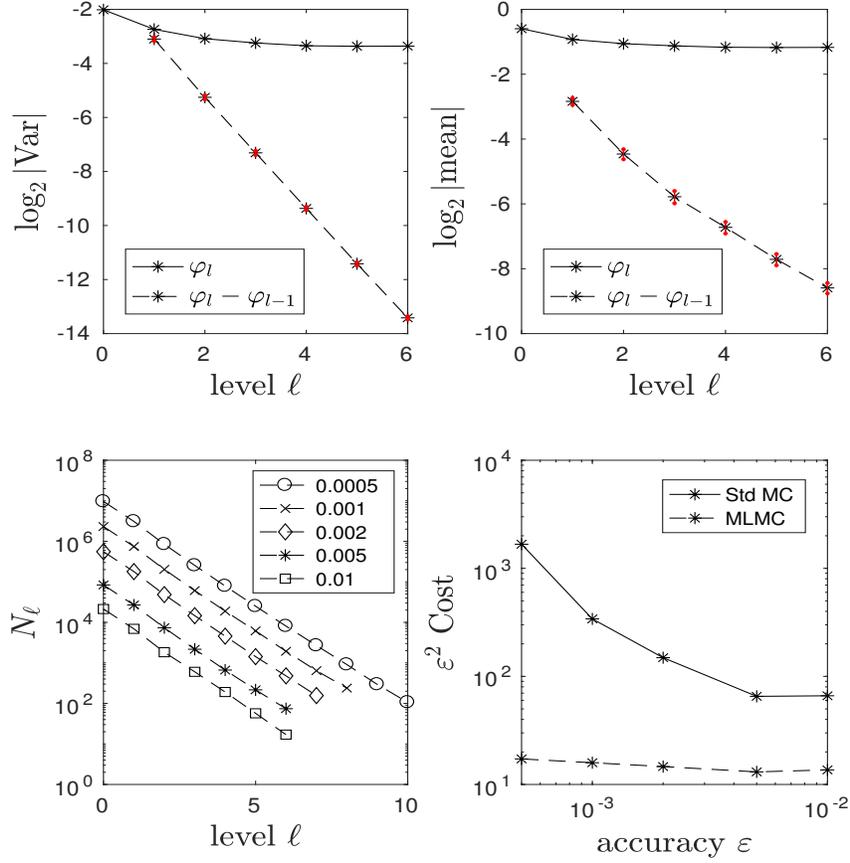}
\end{center}
\caption{Adaptive MLMC for invariant measure}
\label{fig2}
\end{figure}

Figure \ref{fig2} presents the MLMC results. The top right plot shows first order convergence for the weak error and the top left plot shows second order convergence for the multilevel correction variance. Hence the computational
cost for RMS accuracy $\varepsilon$ is $O(\varepsilon^{-2})$ which is verified in the bottom 
right plot, while the bottom left plot shows the number of MLMC samples on each level as a 
function of the target accuracy.

\clearpage

\section{Proofs}

\label{sec:proofs}

This section has the proofs of the three main theorems in this paper,
one on stability, and two on the order of strong convergence.

\subsection{Preliminaries}
In this subsection, we introduce some inequalities and results we use frequently in the following sections.
\subsubsection{Young inequality}
For any $\zeta\!>\!0$ and $P,Q\!>\!1$ satisfying $\frac{1}{P}\!+\!\frac{1}{Q}=1,$ the following inequality holds for any $A,B\!>\!0,$
\begin{equation*}
A\, B\,=\,A\zeta\ \frac{B}{\zeta}\,\leq \frac{A^P\zeta^P}{P}+\frac{B^Q}{Q\, \zeta^Q}.
\end{equation*}
In this paper, we use two particular cases. First, we take $P\!=\!Q\!=\!2$, and 
$\zeta^2\!=\!2\xi \!>\! 0$ and get
\begin{equation}
A\, B\leq \xi A^2 + \frac{B^2}{4\,\xi}.
\label{Young inequality 2}
\end{equation}
Second, for any $p\geq2$ and $\xi>0,$ we take $P\!=\!\frac{p}{p-2},$ $Q\!=\!\frac{p}{2},$ 
$A\!=\!a^{p-2},$ $B\!=\!b^2$ and $\zeta \!=\! \xi^{(p-2)/p},$ and get   
\begin{equation}
a^{p-2}b^2\leq \frac{(p\!-\!2)\,\xi}{p}\,a^p+\frac{2}{p\,\xi^{(p-2)/2}}\,b^p.
\label{Young inequality}
\end{equation}
When using these in proofs, we often keep $\xi$ arbitrary initially and choose it later to make one term sufficiently small, as needed.

\subsubsection{Jensen inequality} 
One variant of the Jensen inequality is
\begin{equation}
\left( \sum_k \e^{\lambda t_k}h_k\, u_k \right)^p \!\leq\!
\left(\sum_k \e^{\lambda t_k}h_k \right)^{p-1} \sum_k \e^{\lambda t_k} h_k u^p_k,
\label{Jensen Dis}
\end{equation}
where $p\geq 1$ and $h_k,\ u_k\geq 0.$ Its continuous version is 
\begin{equation}
\left|\int_0^t \phi(s)\,\e^{\gamma s}\,\mathrm{d}s\right|^p\ \leq\ \left(\int_0^t\e^{\gamma s}\,\mathrm{d}s \right)^{p-1}\int_0^t |\phi(s)|^p\e^{\gamma s}\,\mathrm{d}s.
\label{Jensen exp}
\end{equation}

\subsubsection{Exponentially weighted supremum} For simplicity, 
for $\alpha\!>\!0$, we can define $\hM_t^{\alpha,p}\triangleq \sup_{0\leq s\leq t} \e^{\alpha p s}\|\hX_s\|^p,$ and then by 
Young's inequality (\ref{Young inequality 2}), for any $\xi\!>\!0,$
\begin{equation}
\hM_t^{\alpha,p/2}\leq \xi\,\hM_t^{\alpha,p}+\frac{1}{4\,\xi}.
\label{Simple 1}
\end{equation}
We can also define $\bM_t^{\alpha,p}\triangleq \sup_{0\leq s\leq t} \e^{\alpha p s}\|\bX_s\|^p,$ 
which implies
\begin{equation}
\bM_t^{\alpha,p}\leq \e^{\alpha p \hmax}\,\hM_t^{\alpha,p},
\label{Simple 2}
\end{equation}
since $\bX_s\!=\!\hX_\sd$ and $|s\!-\!\sd|\leq \hmax,$
and
\begin{eqnarray}
\int_0^t \e^{\gamma ps/2}\|\bX_s\|^{p/2} \D s 
&\leq& \bM_t^{\alpha,p/2}\int_0^t \e^{(\gamma-\alpha)ps/2} \D s 
\nonumber \\&\leq& 
\frac{2\e^{(\gamma-\alpha)pt/2}}{p\,(\gamma-\alpha)}\,\e^{\alpha p \hmax/2}\hM_t^{\alpha,p/2}.
\label{Simple 3} 
\end{eqnarray}
provided $\gamma\!>\!\alpha\!>\!0$. 

\subsection{Theorem \ref{thm:stability}}

\begin{proof}
By theorem 1 in Part I \cite{Part1}, we know $T$ is almost surely attainable. Therefore we can directly analyse our discretization scheme without the $K$ truncation which was used in that paper. The proof proceeds in three steps.  First, we derive an upper bound 
for $\e^{\alpha pt}\|\hX_t\|^p$. Second, we show that the moments
$\EE[ \hM^{\alpha,p}_t ]$ and $\EE[ \bM^{\alpha,p}_t ]$ are each bounded by $C_p\e^{\alpha p t}$ where $C_{p}$ is a constant  which only depends on $p$, $x_0$, $\hmax$ and the constants $\alpha, \beta$ in
Assumption \ref{assp:timestep}. Finally, we get the uniform bound for $\EE[ \|\hX_t\|^p ]$ and $\EE[ \|\bX_t\|^p ].$

The proof is given for $p\!\geq\! 4$; the result for 
$0\!<\! p \!<\! 4$ follows from H{\"o}lder's inequality.

\noindent
\textbf{{Step 1: }}
If we define $\phi(x)\triangleq x\!+\!h(x)f(x)$,
then (\ref{eq:dis_SDE}) gives 
\begin{eqnarray*}
\|\hX_\tnp\|^2  & = & \|\hX_\tn\|^2 
+ 2\, h_n\left( \langle\hX_\tn,f(\hX_\tn)\rangle + \halfs h_n \|f(\hX_\tn)\|^2 \right)
\\ &&
+\, 2\,\langle \phi(\hX_\tn), g(\hX_\tn)\,\DW_n \rangle
+ \|g(\hX_\tn)\, \DW_n\|^2.
\end{eqnarray*}
Using condition (\ref{eq:timestep}) for $h$ then gives
\begin{eqnarray*}
\|\hX_\tnp\|^2 & \leq & 
\|\hX_\tn\|^2 \ -\  2\,\alpha\|\, \hX_\tn\|^2h_n \ +\ 2\,\beta\, h_n 
\nonumber\\&& 
+\, 2\,\langle \phi(\hX_\tn), g(\hX_\tn)\,\DW_n \rangle
+ \|g(\hX_\tn)\, \DW_n\|^2.
\end{eqnarray*}
Since $1\!-\!2\alpha h_n\leq \e^{-2\alpha h_n}$ and $g$ and $h$ are both bounded, 
we multiply by $\e^{2\alpha \tnp}$ on both sides to obtain
\begin{eqnarray}
\e^{2\alpha \tnp}\|\hX_\tnp\|^2 & \!\!\leq\!\! &
\e^{2\alpha t_n}\|\hX_\tn\|^2 \! + 2\e^{2\alpha (\tn+\hmax)}\,\beta\, h_n 
+ \e^{2\alpha (\tn+\hmax)} \beta\|\, \DW_n\|^2
\nonumber\\ && 
+\, 2\,\e^{2\alpha \tnp}\langle \phi(\hX_\tn), g(\hX_\tn)\,\DW_n \rangle.
\label{eq:dis_one_step}
\end{eqnarray}
Similarly, for the partial timestep from $\td$ to $t$, 
since $(t\!-\!\td)\leq h_{n_t},$
\begin{eqnarray}
 \langle\hX_\td,f(\hX_\td)\rangle + \halfs (t-\td) \|f(\hX_\td)\|^2 \leq -\alpha\|\hX_\td\|^2+\beta, 
\label{eq:partial}
\end{eqnarray}
and therefore we obtain
\begin{eqnarray}
\e^{2\alpha t}\|\hX_t\|^2 \!\!\!\!& \leq & \!\!\!\!
\e^{2\alpha \td}\|\hX_{\td}\|^2 \ +\ 2\e^{2\alpha (\td+\hmax)}\,\beta\, (t-\td) + \e^{2\alpha (\td+\hmax)} \beta\|\, W_t-W_\td\|^2
\nonumber\\&& 
+\, 2\,\e^{2\alpha t}\langle \phi(\hX_\td), g(\hX_\td)\,(W_t-W_\td) \rangle.
\label{eq:con_one_step}
\end{eqnarray}
Summing (\ref{eq:dis_one_step}) over multiple timesteps and then adding (\ref{eq:con_one_step}) gives
\begin{eqnarray*}
\e^{2\alpha t}\|\hX_t\|^2 & \leq & 
\|x_0\|^2 \ +\ 2\beta \e^{2\alpha \hmax}\!\left(\sum_{k=0}^{n_t-1} \e^{2\alpha t_{k}}h_k+\e^{2\alpha \td}(t-\td)\right)
\\[-0.1in] &&
\!\!\!\!\!\!\!\!\!\!\!\!\!\!\!\!\!\!\!\!\!\!\!+\, 2\sum_{k=0}^{n_t-1}\e^{2\alpha t_{k+1}}\langle \phi(\hX_{t_k}), g(\hX_{t_k})\DW_k \rangle)
+  \beta \e^{2\alpha \hmax}\sum_{k=0}^{n_t-1}\e^{2\alpha t_k}\|\DW_k\|^2
\\ &&
\!\!\!\!\!\!\!\!\!\!\!\!\!\!\!\!\!\!\!\!\!\!\!+\,  2 \e^{2\alpha t}\langle  \hX_\td \!+\! f(\hX_\td)\, (t\!-\!\td),\, g(\hX_\td)(W_t\!-\!W_\td) \rangle
+\, \beta \e^{2\alpha (\td+\hmax)} \|W_t\!-\!W_\td\|^2.
\end{eqnarray*}
Bounding the first summation using a Riemann integral, and re-writing the second 
as an It{\^o} integral, raising both sides to the power $p/2$ 
and using Jensen's inequality, we obtain
\begin{eqnarray}
\label{os3fs4}
\e^{\alpha pt}\|\hX_t\|^p & \!\!\leq\!\! & 
6^{p/2 - 1}\e^{\alpha p \hmax}\left\{ \rule{0in}{0.25in}
\|x_0\|^p \ +\  
\left(2\, \beta \int_0^t\e^{2\alpha s}\,\D s\right)^{p/2}\ 
\right. \nonumber \\ && 
\!\!\!\!\!\!\!\!\!\!\!\!\!\!\!\!\!\!\!\!\!\! 
+ \, \left|\, 2\! \int_0^\td \e^{2\alpha (\sd+h(\bX_s))}\! \langle \phi(\bX_s), g(\bX_s)\, \D W_s\rangle \,\right|^{p/2}
 + \left(\beta\sum_{k=0}^{n_t-1}\e^{2\alpha t_k}\|\DW_k \|^2 \! \right)^{p/2}
\nonumber \\ &&
\!\!\!\!\!\!\!\!\!\!\!\!\!\!\!\!\!\!\!\!\!\!
\left.+  \left|2\e^{2\alpha \td}\langle  \bX_t \!+\! f(\bX_t)\, (t\!-\!\td), g(\bX_t) (W_t\!-\!W_\td) \rangle \right|^{p/2}
\!\!
+\, \beta^{p/2} \e^{\alpha p \td}\|W_t\!-\!W_\td\|^p 
\rule{0in}{0.25in} \right\}.\nonumber \\
\end{eqnarray}


\noindent
\textbf{{Step 2: }} For any $0\!\leq\! t\!\leq\! T,$ we take the supremum on 
both sides of inequality (\ref{os3fs4}) and 
then take the expectation to obtain
\[
\EE\left[\hM_t^{\alpha,p}\right]=\EE\left[ \sup_{0\leq s\leq t}\e^{\alpha p s}\|\hX_s\|^p\right] \ \leq \ 
 6^{p/2 - 1}\e^{\alpha p \hmax}\left( I_1 + I_2  + I_3  + I_4 + I_5 \right), 
\]
where
\begin{eqnarray*}
I_1 &=& \|x_0\|^p +  \left(2\, \beta \int_0^t\e^{2\alpha s}\,\D s\right)^{p/2}, 
\\[0.05in]
I_2 &=& \EE\left[ \sup_{0\leq s\leq \td}\left| \,2\! \int_0^s \e^{2\alpha (\ud+h(\bX_u))}
\langle \phi(\bX_u), g(\bX_u)\,\D W_u \rangle \right|^{p/2} \right],
\end{eqnarray*}
\begin{eqnarray*}
I_3 &=& \EE\left[ \left(\beta \sum_{k=0}^{n_t-1}\e^{2\alpha t_k}\| \DW_k\|^2\right)^{p/2}\right],
\\[0.05in]
I_4 &=& \EE\left[ \sup_{0\leq s\leq t} 
\left| 2 \e^{2\alpha \sd}\langle \bX_s \!+\! f(\bX_s)\, (s\!-\!\sd), g(\bX_s)  (W_s\!-\!W_\sd) \rangle \right|^{p/2}\right],
\\[0.05in]
I_5 &=& \EE\left[\sup_{0\leq s\leq t} \beta^{p/2}\e^{\alpha p \sd} \|W_s\!-\!W_\sd\|^p \right].
\end{eqnarray*}
We now consider $I_1, I_2, I_3, I_4, I_5$ in turn.
\[
I_1 =\|x_0\|^p 
+ (2 \beta)^{p/2} \left(\frac{\e^{2\alpha t}-1}{2\alpha}\right)^{p/2}\leq \|x_0\|^p
+ (\beta/\alpha)^{p/2} \e^{\alpha pt}.
\]
By the Burkholder-Davis-Gundy inequality, there exist constants $C_p^1$ such that
\begin{eqnarray*}
I_2 &= & 
\EE\left[ \sup_{0\leq s\leq \td}\left| \,2\! \int_0^s \e^{2\alpha (\ud+h(\bX_u))}\langle \phi(\bX_u), g(\bX_u)\, \D W_u\rangle \right|^{p/2}\right]\\
&\leq&
 \EE\left[C_p^1 \left(\int_0^\td \e^{4\alpha u}\| \phi(\bX_u)^T g(\bX_u) \|^2 \, \D u \right)^{p/4} \right] .
\end{eqnarray*}
Due to condition (\ref{eq:timestep}), for $u\!<\!\td$ we have
\begin{eqnarray*}
\|\phi(\bX_u)\|^2 
   &=& \|\bX_u\|^2 + 2\, h(\bX_u)\, 
\left( \langle \bX_u, f(\bX_u)\rangle + \halfs\, h(\bX_u) \|f(\bX_u)\|^2 \rule{0in}{0.16in} \right)
\\ &\leq&
\|\bX_u\|^2+ 2 \, h(\bX_u) \, (-\alpha\|\bX_u\|^2 +\beta)
\\ &\leq& \|\bX_u\|^2 + 2\, \beta \hmax,
\end{eqnarray*}
and hence by Jensen's inequality and the boundedness condition (\ref{eq:g_bound}) of $g$ , we obtain
\[
\|\phi(\bX_u)^T g(\bX_u)\|^{p/2} \leq 2^{p/4-1}\beta^{p/4} \left(\rule{0in}{0.16in}
 \|\bX_u\|^{p/2} + (2\,\beta \hmax)^{p/4} 
\right).
\]

Therefore, using Jensen's inequality (\ref{Jensen exp}) with 
$\gamma\!=\!2\alpha$, followed by (\ref{Simple 3}) with 
$\gamma \!=\! (1+4/p)\alpha$ and then 
(\ref{Simple 1}) with $\xi\!=\!\e^{-\alpha p t/2}\zeta$, there exists 
a constant $C_p^2$ which is linearly dependent on $\zeta^{-1}$ such that
\begin{eqnarray*}
 I_2&\leq& \EE\left[ 
C_p^1 (\e^{2\alpha t}/(2\alpha))^{p/4-1}\int_0^t \e^{\alpha(p/2+2) u}\| \phi(\bX_u)^T g(\bX_u) \|^{p/2} \, \D u \right] 
\\ &\leq&  \EE\left[
C_p^1 (\e^{2\alpha t}/\alpha)^{p/4-1}\beta^{p/4} \int_0^t \e^{\alpha(p/2+2) u}\,\left( \|\bX_u\|^{p/2} + (2\,\beta \hmax)^{p/4} \right) \D u \right] 
\\
&  \leq &\EE\left[ \frac{C_p^1}{2}\left(\frac{\beta}{\alpha}\right)^{p/4} \e^{\alpha p (t+\hmax)/2} \hM_t^{\alpha,p/2}  \right] + C_p^1 \beta^{p/2}\left(\frac{2\hmax}{\alpha}\right)^{p/4}\frac{2 \e^{\alpha p t}}{p+4}\\
& \leq &  
\frac{C_p^1}{2}\left(\frac{\beta}{\alpha}\right)^{p/4} \e^{\alpha p \hmax/2}\, \zeta
\,\EE\left[ \hM_t^{\alpha,p} \right]+C_p^2\, \e^{\alpha p t}.
\label{os3fn8}
\end{eqnarray*}

Using Jensen inequality (\ref{Jensen Dis}), we obtain
\begin{eqnarray*}
I_3 &\leq& 
\beta^{p/2}\, \left(\int_0^t \e^{2\alpha s}\D s\right)^{p/2-1}
\EE\left[\ \sum_{k=0}^{n_t-1} h_k \,\e^{2\alpha t_k}\ \frac{\|\DW_k\|^p}{h_k^{p/2}}
\right]
\\ &\leq & \, c_p\left(\beta\int_0^t \e^{2\alpha s}\D s\right)^{p/2}
\leq c_p(\beta/2\alpha)^{p/2}\e^{\alpha p t},
\end{eqnarray*}
where we define
\[
c_p \triangleq \EE[\sup_{0\leq t\leq 1}\|W_t-W_0\|^p] < \infty.
\]
so that
$\EE[\|\DW_k\|^p] \leq c_p h_k^{p/2}$.

In considering $I_4$, we start by observing that for $t_k\!\leq\!s\!<\!t_{k+1}$ 
\begin{equation}
\EE\!\left[ \sup_{t_k\leq u \leq s} 
\!\! \|(W_u\!-\!W_{t_k}) \|^p\ |\ \mathcal{F}_{\sd}\right] 
\, =\,  c_p\, (s\!-\!\sd)^{p/2} 
\, \leq\,  c_p \hmax^{p/2-1}(s\!-\!\sd).
\label{eq:DW}
\end{equation}
In addition, using (\ref{eq:partial}) and following the same argument as for $I_2$, 
we have
\[
\|\bX_s\!+\! f(\bX_s) (s\!-\!\sd)\|^{p/2} \| g(\bX_s)\|^{p/2} \leq
2^{p/4-1}\beta^{p/4} \left(
 \|\bX_s\|^{p/2} + (2\,\beta \hmax)^{p/4} 
\right).
\]
Therefore, combining the estimation (\ref{eq:DW}), (\ref{Simple 3}) 
with $\gamma\!=\!2\alpha$ and (\ref{Simple 1}) with 
$\xi=\e^{-\alpha pt/2}\zeta$, there exists $C_p^3$ which is 
linearly dependent on $\zeta^{-1}$ such that
\begin{eqnarray*}
I_4 &\leq& 
2^{p/2}\, \EE\left[ \sup_{0\leq s\leq t} \e^{\alpha p \sd} \left|
\langle \bX_s\!+\! f(\bX_s) (s\!-\!\sd), g(\bX_s)\, (W_s\!-\!W_\sd) \rangle \right|^{p/2}\right]\\
&\leq& 2^{p/2}\, \EE\left[ \sup_{0\leq s\leq t} \e^{\alpha p \sd} \| \bX_s\!+\! f(\bX_s) (s\!-\!\sd)\|^{p/2} \|g(\bX_s)\|^{p/2}\| (W_s\!-\!W_\sd) \|^{p/2}\right]
\\ &\leq&
2^{3p/4-1}\beta^{p/4}\, \EE\!\left[\sum_{k=0}^{n_t-1} \! \e^{\alpha p t_k}
\left(\|\bX_{t_k}\|^{p/2}+(2\beta \hmax)^{p/4}\right)
\!\!\!\sup_{t_k\leq s< t_{k+1}} \!\!\! \|W_s\!-\!W_{\sd}\|^{p/2} \right.
\\&& \hspace{0.6in} \left. +\, \e^{\alpha p \td}
\left(\|\bX_{\td}\|^{p/2}+(2\beta \hmax)^{p/4}\right)
\!\sup_{\td\leq s< t} \! \| W_s\!-\!W_\sd\|^{p/2}\right]
\\ &\leq& 
2^{3p/4-1}\beta^{p/4}c_{p/2} \hmax^{p/4-1}\,
\EE\left[ \int_0^t \e^{\alpha p s}\left(\|\bX_s\|^{p/2}+(2\beta \hmax)^{p/4}\right)\D s \right] 
\\&\leq&
2^{3p/4}\beta^{p/4}c_{p/2} \hmax^{p/4-1}\, (p \alpha)^{-1}\e^{\alpha p \hmax/2}\, \zeta\,
\EE\left[\hM_t^{\alpha,p}\right]+C_p^3\e^{\alpha p t}.
\end{eqnarray*}
Similarly, again using the same definition for $c_p$, we have
\[
I_5 \leq 
c_p\, \beta^{p/2} \hmax^{p/2-1}  \e^{\alpha p t}/(\alpha p).
\]

Collecting together the bounds for $I_1, I_2, I_3, I_4, I_5$,
we conclude that we can choose $\zeta\!>\!0$ sufficiently small so that 
there exist constants $C^4_{p}$ and $C_p^5$ such that
\[
\EE\left[\hM_t^{\alpha,p}\right]
\leq \fracs{1}{2}\, \EE\left[ \hM_t^{\alpha,p}\right]
+ C_p^4 \|x_0\|^p + C_p^5\, \e^{\alpha p t},
\]
and hence
\[
\EE\left[\hM_t^{\alpha,p} \right]
\leq 2C_p^4 \,\|x_0\|^p+2C_p^5\, \e^{\alpha p t}.
\]

\noindent
\textbf{{Step 3: }}
Due to the definition of $\bM_t^{\alpha,p}$ and inequality (\ref{Simple 2}),
for any $t\geq 0,$
\begin{eqnarray*}
\EE\left[\|\bX_t\|^p\right] 
\ \leq\  \e^{-\alpha p t}\, \EE\left[\bM_t^{\alpha,p} \right]
&\leq& \e^{-\alpha p t}\, \e^{\alpha p \hmax} \, \EE\left[\hM_t^{\alpha,p} \right]
\\&\leq& \e^{\alpha p \hmax}(2\,C_p^4\|x_0\|^p +2C_p^5)
\ \triangleq\ C_p
\end{eqnarray*}
and similarly
\[
\EE\left[\|\hX_t\|^p\right] 
\ \leq\ \e^{-\alpha p t}\, \EE\left[\hM_t^{\alpha,p} \right]
\ \leq\ 2\,C_p^4\, \|x_0\|^p+2C_p^5
\ < \ C_p.
\]
\end{proof}

\subsection{Theorem \ref{thm:convergence_order}}

\begin{proof}
The approach which is followed is to bound the approximation
error $e_t\triangleq \hX_t - X_t$ by terms which depend on 
$\hX_s\!-\!\bX_s$, and then use local analysis within 
each timestep to bound these. The proof is for 
$2\leq\! p\!\leq\! p^*$; the result for $0\!<\! p \!<\! 2$ follows 
from H{\"o}lder's inequality.

We start by combining the original SDE \eqref{SDE} with (\ref{eq:approx_SDE})
to obtain
\[
e_t
\ =\ \int_0^t \left( f(\bX_s)-f(X_s)\right)\,\D s +\int_0^t\left( g(\bX_s)-g(X_s)\right)\,\D W_s,
\]
and then by It{\^o}'s formula and Young's inequality (\ref{Young inequality 2}), together with $e_0=0,$ and $\lambda,\ \eta$ as defined in Assumption \ref{assp:ContractionLipschitz}, we get
\begin{eqnarray*}
\e^{\lambda p t/2}\|e_t\|^p&\!\!\!\!\!\!\leq&\!\!\!\!\!\!
\int_0^t \frac{p\lambda}{2}\, \e^{\lambda p s/2}\|e_s\|^p\ \D s+\int_0^t p\langle e_s, f(\bX_s)\!-\!f(X_s)\rangle \e^{\lambda p s/2}\|e_s\|^{p-2} \,\D s \\
&&+\int_0^t \frac{p(p-1)}{2}\|g(\bX_s)\!-\!g(X_s)\|^2 \e^{\lambda p s/2}\|e_s\|^{p-2} \,\D s \\ 
&&+\int_0^t p\, \langle e_s, (g(\bX_s)\!-\!g(X_s))\e^{\lambda p s/2}\,\|e_s\|^{p-2}\,\D W_s \rangle\\
&\!\!\!\leq&\!\!\!
\int_0^t\frac{ p\lambda}{2}\, \e^{\lambda p s/2}\|e_s\|^p\ \D s+\!\!\! \int_0^t p\langle e_s, f(\hX_s)\!-\!f(X_s)\rangle \e^{\lambda p s/2}\,\|e_s\|^{p-2} \,\D s \\
&&- \int_0^t p \langle e_s, f(\hX_s)\!-\!f(\bX_s)\rangle \e^{\lambda p s/2}\,\|e_s\|^{p-2} \,\D s \\
&&\!\!\!\!+p\!\int_0^t\! \left(\frac{p-1}{2}+\frac{\lambda}{4\eta}\right)\|g(\hX_s)\!-\!g(X_s)\|^2\e^{\lambda p s/2}\,\|e_s\|^{p-2} \D s\\
&&\!\!\!\!+p\!\int_0^t\!\left(\frac{p-1}{2}+\frac{\eta(p-1)^2}{\lambda}\right)\!\|g(\hX_s)\!-\!g(\bX_s)\|^2\e^{\lambda p s/2}\,\|e_s\|^{p-2} \D s\\
&&\!\!\!\!+\int_0^t p\, \langle e_s, (g(\bX_s)\!-\!g(X_s))\e^{\lambda p s/2}\,\|e_s\|^{p-2}\,\D W_s \rangle
\end{eqnarray*}
Using the conditions in Assumption \ref{assp:ContractionLipschitz}, (\ref{eq:g_Lipschitz}) implies that
\[
\|g(\hX_s)-g(\bX_s)\|^2\leq \eta\|\hX_s-\bX_s\|^2.
\]
(\ref{eq:contraction_Lipschitz}) and (\ref{eq:g_Lipschitz}) imply that
\[
\langle e_s, f(\hX_s)\!-\!f(X_s)\rangle + \left(\frac{p-1}{2}+\frac{\lambda}{4\eta}\right)\|g(\hX_s)-g(X_s)\|^2\leq -\frac{3\lambda}{4}\, \|e_s\|^2.
\]
(\ref{eq:local_Lipschitz}) and Young inequality (\ref{Young inequality 2}) implies that
\begin{eqnarray*}
\left| \langle e_s, f(\hX_s)\!-\!f(\bX_s) \rangle \right|
  &\leq& \|e_s\| \, L(\hX_s,\bX_s) \, \| \hX_s\!-\!\bX_s \|
\\&\leq& \frac{\lambda}{8} \|e_s\|^2 + \frac{2}{\lambda} L(\hX_s,\bX_s)^2 \| \hX_s\!-\!\bX_s \|^2.
\end{eqnarray*}
where $L(x,y)\triangleq \gamma(\|x\|^q + \|y\|^q) + \mu$. Hence,
\begin{eqnarray*}
  \e^{\lambda p t/2}\|e_t\|^p &\!\leq\!& \!\!\, \int_0^t -\frac{p\lambda}{8}\, \e^{\lambda p s/2}\|e_s\|^p\ \D s\\
  &&+ \int_0^t p\,
\hat{L}(\hX_s,\bX_s)\|\hX_s\!-\!\bX_s\|^2\e^{\lambda p s/2}\|e_s\|^{p-2}\, \D s\\ 
&&+\int_0^t p\langle e_s, (g(\bX_s)\!-\!g(X_s))\e^{\lambda p s/2}\|e_s\|^{p-2}\,\D W_s \rangle,
\end{eqnarray*}
where $\hat{L}(x,y)=\frac{2}{\lambda}
L(x,y)^2 \!+\! \frac{(p-1)\eta}{2}+\frac{\eta^2(p-1)^2}{\lambda}.$
Young inequality (\ref{Young inequality}) implies
\begin{eqnarray*}
  \e^{\lambda p t/2}\|e_t\|^p\!\! &\!\leq\!& \! \int_0^t 2\left(\frac{8(p\!-\!2)}{p\lambda}\right)^{p/2-1}
\hat{L}(\hX_s,\bX_s)^{p/2}\e^{\lambda p s/2}\|\hX_s\!-\!\bX_s\|^p\,\D s  \\ 
&&  +\, \int_0^t p\langle e_s, (g(\bX_s)\!-\!g(X_s))\e^{\lambda p t/2}\|e_s\|^{p-2}\,\D W_s \rangle.
\end{eqnarray*}
Taking the expectation of each side yields
\[
\label{eq:halfway}
\EE\left[ \e^{\lambda p t/2}\|e_t\|^p\right] \leq
2\left(\frac{8(p\!-\!2)}{p\lambda}\right)^{p/2-1}\int_0^t \EE\left[\hat{L}(\hX_s,\bX_s)^{p/2}\|\hX_s\!-\!\bX_s\|^p\right] \e^{\lambda p s/2}\D s.
\]
By the H{\"o}lder inequality, 
\begin{eqnarray*}
\EE\left[\hat{L}(\hX_s,\bX_s)^{p/2}\|\hX_s\!-\!\bX_s\|^p\right]
\leq \left(\EE\left[\hat{L}(\hX_s,\bX_s)^{p}\right]\EE\left[\|\hX_s\!-\!\bX_s\|^{2p}\right]\right)^{1/2},
\end{eqnarray*}
and $\EE\left[\hat{L}(\hX_s,\bX_s)^{p}\right]$
can be bounded by a constant $C_p^1$ due to the stability property in Theorem \ref{thm:stability}.

For any $s\!\in\![0,T]$,
$\hX_s\!-\!\bX_s = f(\hX_{\sd}) (s\!-\!\sd) + g(\hX_{\sd}) (W_s \!-\! W_\sd)$,
and hence, by a combination of Jensen and H{\"o}lder inequalities, 
we get
\begin{eqnarray*}
\EE\left[ \|\hX_s\!-\!\bX_s\|^{2p}\right] 
&\leq& 2^{2p-1} \left( \EE\left[ \|f(\hX_{\sd})\|^{4p} \right]
\EE \left[ (s\!-\!\sd)^{4p} \right]\right)^{1/2}
\\ &+& 
2^{2p-1}  \left( \EE\left[ \|g(\hX_{\sd})\|^{4p} \right]
                 \EE\left[ \|W_s \!-\! W_\sd\|^{4p} \right] \right)^{1/2}.
\end{eqnarray*}
$\EE[ \|f(\hX_{\sd})\|^{4p}]$ and $\EE[ \|g(\hX_{\sd})\|^{4p}]$ are both finite,
due to stability and the polynomial bounds on the growth of $f(x)$ and $g(x)$.
Furthermore, we have 
$\EE[ (s\!-\!\sd)^{4p}]
\leq (\delta \hmax)^{4p} \leq \hmax^{4p}\delta^{2p} $,
and by standard results there is a constant $c_p$ such that
$\EE[ \|W_s \!-\! W_\sd\|^{4p}] 
= \EE[\ \EE[\|W_s \!-\! W_\sd\|^{4p}\, |\, {\cal F}_\sd] \ ]
\leq c_p (\delta \hmax)^{2p}$.
Hence, there exists a constant $C_p^2\!>\!0$ such that
$\EE[\, \|\hX_s\!-\!\bX_s\|^{2p}] \leq C^2_p\, \delta^p$,
and therefore equation (\ref{eq:halfway}) gives us
\[
\EE\left[ \e^{\lambda p t/2}\|e_t\|^p\right] \leq 2\left(\frac{8(p\!-\!2)}{p\lambda}\right)^{p/2-1}
\int_0^t \sqrt{C_p^1C_p^2}\ \delta^{p/2} \e^{\lambda p s/2} \,\D s,
\]
which provides the final result:
\[
\EE\left[ \|e_t\|^p\right]\leq \frac{4}{\lambda p}\left(\frac{8(p\!-\!2)}{p\lambda}\right)^{p/2-1}\sqrt{C_p^1C_p^2}\ \delta^{p/2}\triangleq C_p\, \delta^{p/2},\ \forall\ t\geq 0
\]
\end{proof}

\subsection{Theorem \ref{thm:convergence_order2}}

\begin{proof}
The proof is given for $p\!\geq\! 4$; the result for 
$0\!<\! p \!<\! 4$ follows from H{\"o}lder's inequality.

The error $e_t\triangleq \hX_t-X_t$ satisfies the SDE
$\D e_t = \left( f(\bX_t)\!-\!f(X_t) \right)\D t$
and hence by It\^{o}'s formula,
\begin{eqnarray}
\e^{2\lambda t}\|e_t\|^2
&=&\int_0^t 2\lambda\, \e^{2\lambda s}\|e_s\|^2\,\D s+
\int_0^t 2\, \e^{2\lambda s}\langle e_s, f(\hX_s)\!-\!f(X_s)\rangle \,\D s \nonumber \\
&&-\int_0^t 2\, \e^{2\lambda s}\langle e_s, f(\hX_s)\!-\!f(\bX_s) \rangle \,\D s
\nonumber \\&\leq &
- \int_0^t  2\, \e^{2\lambda s}\langle e_s, f(\hX_s)\!-\!f(\bX_s)\rangle \,\D s
\label{woch}
\end{eqnarray}
due to the one-sided Lipschitz condition (\ref{eq:onesided_Lipschitz2}), 
so therefore
\begin{eqnarray*}
\EE\left[\sup_{0\leq s\leq t}\e^{\lambda p s}\|e_s\|^p \right]
&\leq&\  
2^{p/2}\,\EE\left[\sup_{0\leq s\leq t}\left| 
\int_0^s \e^{2\lambda u}\langle e_u, f(\hX_u)\!-\!f(\bX_u) \rangle \,\D u\,
\right|^{p/2}\right]  
\end{eqnarray*}
Within a single timestep, 
$\hX_u\!-\!\bX_u = f(\bX_u)(u\!-\!\ud) + (W_u\!-\!W_\ud)$,
and therefore, following a similar approach to the proof of Theorem 4 in \cite{Part1}, Lemma \ref{lemma:useful} gives
\begin{eqnarray*}
\e^{2\lambda u}\langle e_u, f(\hX_u)\!-\!f(\bX_u) \rangle 
&=&\e^{2\lambda u} \langle e_u, \nabla f(\bX_u) (\hX_u\!-\!\bX_u) \rangle\ +\ \e^{2\lambda u}R_u \\[0.1in]
&=& \e^{2\lambda u}\langle e_u, (u\!-\!\ud) \nabla f(\bX_u) f(\bX_u) \rangle\ +\ \e^{2\lambda u}R_u \\
&&  +\ (\e^{2\lambda u}\!-\!\e^{2\lambda \ud})\langle e_u,  \nabla f(\bX_u) (W_u\!-\!W_\ud) \rangle \\
&& +\e^{2\lambda \ud}\langle ( e_u\!-\!e_\ud),  \nabla f(\bX_u) (W_u\!-\!W_\ud) \rangle\\
&&  +\ \e^{2\lambda \ud}\langle e_\ud, \nabla f(\bX_u) (W_u\!-\!W_\ud) \rangle
\end{eqnarray*}
where 
$|R_u|\leq \left( \gamma\, (\|\hX_u\|^q \!+\! \|\bX_u\|^q) + \mu\right)\, \|e_u\| \, \|\hX_u\!-\!\bX_u \|^2$,
and hence
\[
\EE\left[\sup_{0\leq s\leq t}\e^{\lambda p s}\|e_s\|^p \right]  \leq \frac{10^{p/2}}{5} (I_1 + I_2 + I_3 + I_4+I_5)
\]
where
\begin{eqnarray*}
I_1 & = & \EE\left[\sup_{0\leq s\leq t}\left|\int_0^s
\e^{2\lambda u} \langle e_u, (u\!-\!\ud)  \nabla f(\bX_u) f(\bX_u) \rangle
 \, \D u \,\right|^{p/2}\right], \\
I_2 & = & \EE\left[\sup_{0\leq s\leq t}\left|\int_0^s \e^{2\lambda u}R_u
 \ \D u \,\right|^{p/2}\right], \\
I_3 & = & \EE\left[\sup_{0\leq s\leq t}\left|\int_0^s(\e^{2\lambda u}\!-\!\e^{2\lambda \ud})\langle e_u, \nabla f(\bX_u) (W_u\!-\!W_\ud) \rangle
 \, \D u \,\right|^{p/2}\right],  \\
I_4 & = & \EE\left[\sup_{0\leq s\leq t}\left|\int_0^s \e^{2\lambda \ud}
\langle ( e_u\!-\!e_\ud), \nabla f(\bX_u) (W_u\!-\!W_\ud) \rangle
 \, \D u \,\right|^{p/2}\right], \\
I_5 & = & \EE\left[\sup_{0\leq s\leq t}\left|\int_0^s
\e^{2\lambda \ud}\langle e_\ud, \nabla f(\bX_u) (W_u\!-\!W_\ud) \rangle
 \, \D u \,\right|^{p/2}\right].
\end{eqnarray*}

We now bound $I_1, I_2, I_3, I_4, I_5$ in turn.  Noting that $u\!-\!\ud\leq \delta \hmax$, by Young inequality (\ref{Young inequality 2}) and Jensen inequality (\ref{Jensen exp}), we obtain
\begin{eqnarray*}
I_1 &\!\!\leq\!\!&
\frac{\e^{\lambda(p/2-1)t} }{\lambda^{p/2-1}}\EE\left[\int_0^t  \e^{\lambda pu/2}
\left( \|e_u\| \delta \hmax \| f(\bX_u)\|\ \|\nabla f(\bX_u)\|\right)^{p/2} \e^{\lambda u}\, \D u \right]
\\ 
&\!\!\leq\!\!&\frac{\e^{\lambda(p/2-1)t}}{\lambda^{p/2-1}}(\delta \hmax)^{p/2}\EE\left[\left(\sup_{0\leq s\leq t} \e^{\lambda ps/2}\|e_s\|^{p/2}\right)\!\!\! 
\right. \\ && \left. \hspace{1.75in}
\times \int_0^t \| f(\bX_u)\|^{p/2} \|\nabla f(\bX_u)\|^{p/2}\! \e^{\lambda u}\D u \right]   \\
&\!\!\leq\!\!&\xi \ \EE\left[\sup_{0\leq s\leq t} \e^{\lambda ps}\|e_s\|^{p}\right]
\\ && +\, \frac{\e^{\lambda(p-1)t}}{4\xi\, \lambda^{p-1}}\,(\delta \hmax)^{p}\int_0^t  \EE\left[ \| f(\bX_u)\|^{p} \|\nabla f(\bX_u)\|^{p}\right] \e^{\lambda u}\, \D u .
\end{eqnarray*}
The last integral is finite because of stability and the polynomial bounds on the growth
of both $f$ and $\nabla f$, and hence there is a constant $C^1_{p}$ such that
\[
I_1 \leq \xi\ \EE\left[\sup_{0\leq s\leq t} \e^{\lambda ps}\|e_s\|^{p}\right] + (C^1_{p}/\xi)\, \e^{\lambda p t}\, \delta^p.
\]
Similarly, using the Young inequality (\ref{Young inequality 2}), Jensen inequality (\ref{Jensen exp}) and the H\"{o}lder inequality, we obtain
\begin{eqnarray*}
I_2 &\!\leq\!& 
 \frac{\e^{\lambda(p/2-1)t}}{\lambda^{p/2-1}}\int_0^t \EE\!\left[ \e^{\lambda pu/2}
\|e_u\|^{p/2} L^{p/2}(\hX_u,\bX_u) 
\|\hX_u\!-\!\bX_u \|^p \right]\e^{\lambda u} \D u
\\&\!\leq\!& 
\xi \ \EE\left[\sup_{0\leq s\leq t} \e^{\lambda ps}\|e_s\|^{p}\right]
\\&& + \,\frac{\e^{\lambda(p-1)t}}{4 \xi\, \lambda^{p-1}}
\int_0^t \left( \EE\!\left[ L^{2p}(\hX_u,\bX_u)\right] \, 
 \EE\!\left[ \|\hX_u\!-\!\bX_u \|^{4p}\right]\right)^{\!1/2}\e^{\lambda u}\, \D u.
\end{eqnarray*}
where $L(\hX_u,\bX_u) \equiv  \gamma\, (\|\hX_u\|^q \!+\! \|\bX_u\|^q) \!+\!\mu$.
Hence, using stability and bounds on $\EE\left[ \|\hX_u\!-\!\bX_u \|^{4p} \right]$ 
from the proof of Theorem \ref{thm:convergence_order}, there is a constant $C^2_{p}$ such that
\[
I_2 \leq \xi\, \EE\left[\sup_{0\leq s\leq t} \e^{\lambda ps}\|e_s\|^{p}\right]
 + (C^2_{p}/\xi) \, \e^{\lambda p t}\, \delta^p.
\]
The fact that $\e^{2\lambda u}\!-\!\e^{2\lambda \ud}\leq 2\lambda\, \e^{2\lambda u}(u-\ud),$ and Jensen inequality (\ref{Jensen exp}) gives
\[
I_3 \leq
\frac{\e^{\lambda(p/2-1)t} }{\lambda^{p/2-1}}
\!\int_0^t\! \EE\left[ \e^{\lambda pu/2}
\left( \|e_u\|(2\lambda \delta \hmax)\|\nabla f(\bX_u)\|\, \|W_u-W_{\ud}\|\right)^{p/2}\right]\! \e^{\lambda u}\D u 
\]
and using the approach to estimating $I_1,$ there is similarly
a constant $C_p^3$ such that
\[
I_3\leq \xi \, \EE\left[\sup_{0\leq s\leq t} \e^{\lambda ps}\|e_s\|^{p}\right] 
+ (C^3_{p}/\xi) \, \e^{\lambda p t}\, \delta^p.
\]  
For the next term, $I_4$, we start by bounding $\|e_s\!-\!e_\sd\|$.  Since
\[
e_s\!-\!e_\sd = \int_\sd^s \left(f(\bX_u) - f(X_u)\right) \ \D u,
\]
by Jensen's inequality and Assumption \ref{assp:enhanced_Lipschitz} it follows that
\begin{eqnarray*}
\| e_s\!-\!e_\sd \|^p 
   &\leq& (\delta \hmax)^{p-1} \int_\sd^s \| f(\bX_u) \!-\! f(X_u) \|^p \, \D u
\\ &\leq& (2\delta \hmax)^{p-1}
\int_\sd^s L^p(\bX_u,X_u) \left( \|e_u\|^p + \| \hX_u\!-\!\bX_u\|^p \right) \, \D u,
\end{eqnarray*}
where $L(\bX_u,X_u)\equiv \gamma(\|\bX_u\|^q\!+\!\|X_u\|^q) + \mu$.
We again have an $O(\delta^{p/2})$ bound for $\EE[ \|\hX_s\!-\!\bX_s\|^p]$,
while Theorem \ref{thm:convergence_order} proves that there is a 
constant $c_{p}$ such that
\[
\EE[ \| e_s \|^p] \leq c_{p} \, \delta^{p/2}.
\]
Combining these, and using the H{\"o}lder inequality and the finite 
bound for $\EE[L^p(\bX_u,X_u)]$ for all $p\!\geq\! 2$, due to the 
usual stability results, we find that there is a different constant 
$c_{p}$ such that
\[
\EE[ \| e_s\!-\!e_\sd \|^p ] \leq c_{p} \, \delta^{3p/2}.
\]
Now, by Jensen inequality (\ref{Jensen exp}), we obtain
\begin{eqnarray*}
I_4 \leq \frac{\e^{\lambda(p-2)t}}{(2\lambda)^{p/2-1}} \int_0^t \EE\left[ 
\| e_s\!-\!e_\sd\|^{p/2}\|\nabla f(\bX_s)\|^{p/2} \|W_s\!-\!W_\sd\|^{p/2} \right]\e^{2\lambda s}\D s,
\end{eqnarray*}
so using the H{\"o}lder inequality and the usual stability bounds, 
we conclude that there is a constant $C^4_{p}$ such that
\[
I_4 \leq  C^4_{p}\, \e^{\lambda p  t}\, \delta^p.
\]
Lastly, 
considering that 
\[
\D \left((t\!-\!t_{n+1})(W_t\!-\!W_{t_n})\right)=(W_t\!-\!W_{t_n})\,\D t+(t\!-\!t_{n+1})\,\D W_t
\] 
in the time interval $[t_n,t_{n+1}]$ conditioned on $\mathcal{F}_{t_n},$ we have
\[
\int_{t_n}^{t_{n+1}}(W_u\!-\!W_{t_n})\,\D u 
= - \int_{t_n}^{t_{n+1}}(u\!-\!t_{n+1})\,\D W_u,
\]
and therefore we can split $I_5$ into two parts:
\begin{eqnarray*}
I_5&\leq& 2^{p/2-1}\left\{\EE\left[\sup_{0\leq s\leq t}\left| \sum_{k=0}^{n_s-1} \int_{t_k}^{t_{k+1}} \e^{2\lambda\ud}(u\!-\!t_{k+1})\langle e_{t_k},\nabla f(\bX_{t_k})\D W_u\rangle \right|^{p/2}\right] \right.\\
&& \left. ~~~~~~~~+\EE\left[ \sup_{0\leq s\leq t}\left| \int_{\sd}^s
\e^{2\lambda \ud}\langle e_\ud, \nabla f(\bX_\ud) (W_u\!-\!W_\ud) \rangle
\,\D u\right|^{p/2} \right]\right\}\\
 &:= & 2^{p/2 - 1}\, (I_{51} + I_{52}).
\end{eqnarray*}
By the Burkholder-Davis-Gundy inequality, Young inequality (\ref{Young inequality 2}) and Jensen inequality (\ref{Jensen exp}), there exists a constant $c_p^1$ such that
\begin{eqnarray*}
I_{51} &\leq& \EE\left[\sup_{0\leq s\leq t}\left|  \int_{0}^{s} \e^{2\lambda\ud}(u-\ud-h(\bX_{\ud}))\langle e_{\ud},\nabla f(\bX_{\ud})\D W_u\rangle \right|^{p/2}\right]\\
& \leq &\EE\left[c_p^1 \left( \int_{0}^{t} \e^{4\lambda\ud}(\delta \hmax)^2\|e_{\ud}\|^2\|\nabla f(\bX_{\ud})\|^2\D u \right)^{p/4}\right]\\
&\leq & \EE\left[c_p^1 \sup_{0\leq s\leq t} \e^{\lambda p s/2}\|e_s\|^{p/2} \left( \int_{0}^{t} (\delta \hmax)^2\|\nabla f(\bX_{\ud})\|^2 \e^{2\lambda u}\D u \right)^{p/4}\right]\\
&\leq & \xi \,\EE\left[\sup_{0\leq s\leq t} \e^{\lambda p s}\|e_s\|^{p} \right]\\
&&+\frac{1}{4 \xi}\,\EE\left[\left(c_p^1\right)^2 \frac{\e^{\lambda(p-2)t}}{(2\lambda)^{p/2-1}} \int_0^t(\delta \hmax)^p \|\nabla f(\bX_{\ud})\|^p \e^{2\lambda u} \D u  \right]
\end{eqnarray*}

Hence, there exists a constant $C^{51}_{p}$ such that
\[
I_{51} \leq 
\xi \ \EE\left[\sup_{0\leq s\leq t} \e^{\lambda ps}\|e_s\|^{p}\right]
+ (C^{51}_{p}/\xi) \ \e^{\lambda p t}\, \delta^p.
\]
Finally, for $I_{52}$, the Young inequality (\ref{Young inequality 2}) 
and Jensen inequality (\ref{Jensen exp}) implies
\begin{eqnarray*}
I_{52} &\!\!=\!\!&
\EE\left[\sup_{0\leq s\leq t}\left| \int_\sd^s
\langle e_\ud, \nabla f(\bX_u) (W_u\!-\!W_\ud)\rangle \e^{2\lambda \ud}\,\D u\right|^{p/2}\right]
\\&\!\!\leq\!\!&
(\delta \hmax)^{p/2-1} \EE\left[\sup_{0\leq s\leq t} \int_\sd^s
\| e_\ud\|^{p/2} \|\nabla f(\bX_u)\|^{p/2} \|W_u\!-\!W_\ud\|^{p/2} \e^{\lambda p \ud}\,\D u\right]
\\&\!\!\leq\!\!&
(\delta \hmax)^{p/2-1} \EE\left[ \left(\sup_{0\leq s\leq t} \e^{\lambda p s/2} \|e_s\|^{p/2}\right)
\right. \\ && \left. \hspace{1.25in}
\times \int_0^t
\|\nabla f(\bX_u)\|^{p/2} \|W_u\!-\!W_\ud\|^{p/2} \e^{\lambda p u/2}\,\D u\ \right]
\\ &\!\!\leq\!\!&
\xi \ \EE\left[\sup_{0\leq s\leq t} \e^{\lambda ps}\|e_s\|^{p}\right]
\\ && +\, \frac{\e^{\lambda p t/2}}{2 \xi\, \lambda p} (\delta \hmax)^{3p/2-2} \!\!\!
\int_0^t\EE\left[ \|\nabla f(\bX_u)\|^{p}\right]\e^{\lambda pu/2} \D u.
\end{eqnarray*}
Thus, we find that there exists a constant $C^{52}_{p}>0$ such that
\begin{equation}
I_{52} \leq \xi \ \EE\left[\sup_{0\leq s\leq t} \e^{\lambda ps}\|e_s\|^{p}\right]
+ (C^{52}_{p}/\xi) \ \e^{\lambda p t}\, \delta^p.
\end{equation}
Combining the five bounds, and choosing $\xi$ to be sufficiently small,
we conclude that there is a constant $C_p$ such that
\[
\EE\left[\sup_{0\leq s\leq t}\e^{\lambda p s}\|e_s\|^p \right] 
\leq C_{p}\, \e^{\lambda p t}\, \delta^p.
\]
and therefore
\[
\EE\left[\|e_t\|^p \right] \leq C_{p}\, \delta^p,\ \forall\ t\geq 0.
\]
\end{proof}

\section{Conclusions and future work}
In this paper, we have developed the adaptive Euler-Maruyama method for a class of ergodic SDEs with non-globally Lipschitz drifts and shown the moments and strong error of the numerical solutions are uniformly bounded in time $T.$ Moreover, we extend this adaptive scheme to Multi-level Monte Carlo for expectations with respect to the invariant measure by using different time intervals on different levels, and constructing an efficient coupling of the fine path and coarse paths with different simulation times. Numerical experiments support the theoretical analysis.

One direction for extension of the theory is to address SDEs which don't satisfy the contractive property. For example, the SDEs arising from the double-well potential energy, in which case, the fine path and the coarse path may converge to different wells resulting in a poor coupling. Another possibility is to apply adaptive methods to SDEs with a discontinuous drift.

\newpage

\bibliographystyle{plain}
\bibliography{citation}

\end{document}